\documentclass[reqno]{amsart}

\usepackage{amssymb}
\usepackage{graphicx}
\usepackage{amscd}
\usepackage[hidelinks]{hyperref}
\usepackage{color}
\usepackage{float}
\usepackage{graphics,amsmath,amssymb}
\usepackage{amsthm}
\usepackage{amsfonts}
\usepackage{latexsym}
\usepackage{epsf}
\usepackage{xifthen}
\usepackage{mathrsfs}
\usepackage{dsfont}
\usepackage{makecell}
\usepackage[FIGTOPCAP]{subfigure}
\usepackage{amsmath}
\allowdisplaybreaks[4]
\usepackage{listings}
\usepackage{etoolbox}
\usepackage{fancyhdr}
\usepackage{pdflscape}
\usepackage[title,toc,titletoc]{appendix}
\usepackage{enumitem}
\usepackage[noadjust]{cite}
\usepackage{forest}

\setlength{\parskip}{2pt}

\setlength{\headheight}{11pt}

\pagestyle{fancy}
\fancyhead[LO]{\footnotesize\shorttitle}
\fancyhead[RO]{\footnotesize\thepage}
\fancyhead[LE]{\footnotesize\thepage}
\fancyhead[RE]{\footnotesize\shortauthors}
\fancyfoot{}

%



 \newtheoremstyle{mytheorem}
 {3pt}
 {3pt}
 {\slshape}
 {}
 {\bfseries}
 {.}
 { }
 {}

\numberwithin{equation}{section}

\theoremstyle{theorem}
\newtheorem{theorem}{Theorem}[section]
\newtheorem*{theorem*}{Theorem}
\newtheorem{corollary}[theorem]{Corollary}
\newtheorem{lemma}[theorem]{Lemma}

\providecommand{\customgenericname}{}
\newcommand{\newcustomtheorem}[2]{%
	\newenvironment{#1}[1]
	{%
		\renewcommand\customgenericname{#2}%
		\renewcommand\theinnercustomgeneric{##1}%
		\innercustomgeneric
	}
	{\endinnercustomgeneric}
}
\newcustomtheorem{ctheorem}{Theorem}

\theoremstyle{definition}
\newtheorem{definition}{Definition}[section]

\newtheorem*{example*}{Example}

\theoremstyle{remark}

\newtheorem*{remark*}{Remark}
\newtheorem*{remarks*}{Remarks}

\newtheoremstyle{named}{}{}{\itshape}{}{\bfseries}{.}{.5em}{#1\thmnote{ #3}}
\theoremstyle{named}

\newcommand{\Keywords}[1]{\ifthenelse{\isempty{#1}}{}{\smallskip \smallskip \noindent \textbf{Keywords}. #1}}
\newcommand{\MSC}[2][2010]{\ifthenelse{\isempty{#2}}{}{\smallskip \smallskip \noindent \textbf{#1MSC}. #2}}
\newcommand{\abstractnote}[1]{\ifthenelse{\isempty{#1}}{}{\smallskip \smallskip \noindent \textsuperscript{\dag}#1}}

\makeatletter
\def\specialsection{\@startsection{section}{1}%
  \z@{\linespacing\@plus\linespacing}{.5\linespacing}%
  {\normalfont}}
\def\section{\@startsection{section}{1}%
  \z@{.7\linespacing\@plus\linespacing}{.5\linespacing}%
  {\normalfont\scshape}}
\patchcmd{\@settitle}{\uppercasenonmath\@title}{\Large\boldmath}{}{}
\patchcmd{\@settitle}{\begin{center}}{\begin{flushleft}}{}{}
\patchcmd{\@settitle}{\end{center}}{\end{flushleft}}{}{}
\patchcmd{\@setauthors}{\MakeUppercase}{\normalsize}{}{}
\patchcmd{\@setauthors}{\centering}{\raggedright}{}{}
\patchcmd{\section}{\scshape}{\large\bfseries\boldmath}{}{}
\patchcmd{\subsection}{\bfseries}{\bfseries\boldmath}{}{}
\renewcommand{\@secnumfont}{\bfseries}
\patchcmd{\@startsection}{\@afterindenttrue}{\@afterindentfalse}{}{}
\patchcmd{\abstract}{\leftmargin3pc}{\leftmargin1pc}{}{}

\def\maketitle{\par
  \@topnum\z@ 
  \@setcopyright
  \thispagestyle{empty}
  \ifx\@empty\shortauthors \let\shortauthors\shorttitle
  \else \andify\shortauthors
  \fi
  \@maketitle@hook
  \begingroup
  \@maketitle
  \toks@\@xp{\shortauthors}\@temptokena\@xp{\shorttitle}%
  \toks4{\def\\{ \ignorespaces}}
  \edef\@tempa{%
    \@nx\markboth{\the\toks4
      \@nx\MakeUppercase{\the\toks@}}{\the\@temptokena}}%
  \@tempa
  \endgroup
  \c@footnote\z@
  \@cleartopmattertags
}
\makeatother



\newcommand{\cL}{\mathcal{L}}

\newcommand{\mA}{\mathscr{A}}

\newcommand{\mI}{\mathscr{I}}
\newcommand{\mM}{\mathscr{M}}
\newcommand{\mS}{\mathscr{S}}

\newcommand{\mW}{\mathscr{W}}

\newcommand{\ta}{\tilde{a}}

\newcommand{\diag}{\operatorname{diag}}

\lstset{
	basicstyle=\ttfamily,
	xleftmargin=2em,xrightmargin=1em,
	breaklines=true
}


\title{Linked partition ideals and the Alladi--Schur theorem}

\author[G. E. Andrews]{George E. Andrews}
\address[G. E. Andrews]{Department of Mathematics, The Pennsylvania State University, University Park, PA 16802, USA}
\email{gea1@psu.edu}

\author[S. Chern]{Shane Chern}
\address[S. Chern]{Department of Mathematics and Statistics, Dalhousie University, Halifax, Nova Scotia, B3H 4R2, Canada}
\email{chenxiaohang92@gmail.com}

\author[Z. Li]{Zhitai Li}
\address[Z. Li]{Department of Mathematics, The Pennsylvania State University, University Park, PA 16802, USA}
\email{zfl5082@psu.edu}

\date{}

\begin{document}

\maketitle

\begin{abstract}

Let $\mathscr{S}$ denote the set of integer partitions into parts that differ by at least $3$, with the added constraint that no two consecutive multiples of $3$ occur as parts. We derive trivariate generating functions of Andrews--Gordon type for partitions in $\mathscr{S}$ with both the number of parts and the number of even parts counted. In particular, we provide an analytic counterpart of Andrews' recent refinement of the Alladi--Schur theorem.

\Keywords{Linked partition ideals, Alladi--Schur theorem, Andrews--Gordon type series, generating function.}

\MSC{11P84, 05A17.}
\end{abstract}

\section{Introduction}

The following result was proved by Schur \cite{Sch1926} in 1926.

\begin{ctheorem}{S}\label{th:S}
	Let $A(n)$ denote the number of partitions of $n$ into parts congruent to $\pm 1$ modulo $6$. Let $B(n)$ denote the number of partitions of $n$ into distinct nonmultiples of $3$. Let $D(n)$ denote the number of partitions of $n$ of the form $\mu_1+\mu_2+\cdots +\mu_s$ where $\mu_i-\mu_{i+1}\ge 3$ with strict inequality if $3\mid \mu_i$. Then
	$$A(n)=B(n)=D(n).$$
\end{ctheorem}

\noindent Alladi later made an observation (cf.~\cite[p.~46, (1.3)]{And2000}).

\begin{ctheorem}{AS}\label{th:AS}
	If we define $C(n)$ to be the number of partitions of $n$ into odd parts with none appearing more than twice, then
	$$C(n)=D(n).$$
\end{ctheorem}

\noindent On the other hand, Glei{\ss}berg \cite{Gle1928} had a refinement of Theorem \ref{th:S}.

\begin{ctheorem}{G}\label{th:G}
	Let $B(m,n)$ denote the number of partitions of $n$ into $m$ distinct nonmultiples of $3$. Let $D(m,n)$ denote the number of partitions of $n$ enumerated by $D(n)$ such that the total number of parts plus the number of multiples of $3$ among the parts equals $m$. Then
	$$B(m,n)=D(m,n).$$
\end{ctheorem}

\noindent Motivated by Glei{\ss}berg's result, Andrews \cite{And2017,And2019} also refined Theorem \ref{th:AS} in the same manner.

\begin{ctheorem}{A}\label{th:A}
	Let $C(m,n)$ denote the number of partitions of $n$ into $m$ odd parts with none appearing more than twice. Let $D'(m,n)$ denote the number of partitions of $n$ enumerated by $D(n)$ such that the total number of parts plus the number of even parts equals $m$. Then
	$$C(m,n)=D'(m,n).$$
\end{ctheorem}

\noindent A surprising point regarding Theorem \ref{th:A} is that partitions enumerated by $D(n)$ are controlled by a difference condition of size $3$ (so in terms of linked partition ideals, which will be explained in the next section, the modulus is $3$), and therefore less attention was paid to even parts in the previous studies.

Recall also that an \textit{Andrews--Gordon type series} is of the form
\begin{equation}\label{eq:And-conj-0}
\sum_{n_1,\ldots,n_r\ge 0}\frac{(-1)^{L_1(n_1,\ldots,n_r)}q^{Q(n_1,\ldots,n_r)+L_2(n_1,\ldots,n_r)}}{(q^{A_1};q^{A_1})_{n_1}\cdots (q^{A_r};q^{A_r})_{n_r}},
\end{equation}
in which $L_1$ and $L_2$ are linear forms and $Q$ is a quadratic form in $n_1,\ldots,n_r$, and the $q$-Pochhammer symbol is defined for $n\in\mathbb{N}\cup\{\infty\}$,
$$(A;q)_n:=\prod_{k=0}^{n-1} (1-A q^k).$$
This terminology comes from the multiple-folder generalization of the Rogers--Ramanujan identities due to Andrews \cite{And1974} and Gordon \cite{Gor1961}: for $1\le i\le k$ and $k\ge 2$,
\begin{equation*}
\prod_{\substack{n\ge 1\\n\not\equiv 0,\pm i\, (\operatorname{mod}\, 2k+1)}}\frac{1}{1-q^n}=\sum_{n_1,\ldots,n_{k-1}\ge 0}\frac{q^{N_1^2+N_2^2+\cdots N_{k-1}^2+N_i+N_{i+1}+\cdots +N_{k-1}}}{(q;q)_{n_1}(q;q)_{n_2}\cdots (q;q)_{n_{k-1}}}
\end{equation*}
where $N_j=n_j+n_{j+1}+\cdots +n_{k-1}$.

Let $\mS$ denote the set of partitions enumerated by $D(n)$ for all nonnegative $n$.

Further, for any partition $\lambda$, we denote by $|\lambda|$ the sum of all parts in $\lambda$, by $\sharp(\lambda)$ the number of parts in $\lambda$, and by $\sharp_{a,M}(\lambda)$ the number of parts in $\lambda$ that are congruent to $a$ modulo $M$.

Many discoveries have been made on the generating function of Andrews--Gordon type for the partition set $\mS$. In particular,
Andrews, Bringmann and Mahlburg \cite{ABM2015} proved that
\begin{align*}
\sum_{\lambda\in\mS}x^{\sharp(\lambda)}q^{|\lambda|}=\sum_{m,n\ge 0}\frac{(-1)^n q^{(m+3n)^2+\frac{m(m-1)}{2}}x^{m+2n}}{(q;q)_m(q^6;q^6)_n},
\end{align*}
Kur\c{s}ung\"{o}z \cite{Kur2021} proved that
\begin{align*}
&\sum_{\lambda\in\mS}x^{\sharp(\lambda)}q^{|\lambda|}\\
&\quad=\sum_{\substack{n_1\ge 0\\n_{21},n_{22}\ge 0}}\frac{q^{2n_1^2-n_1+6n_{21}^2-n_{21}+6n_{22}^2+n_{22}+6n_1(n_{21}+n_{22})+12n_{21}n_{22}}x^{n_1+2n_{21}+2n_{22}}}{(q;q)_{n_1}(q^6;q^6)_{n_{21}}(q^6;q^6)_{n_{22}}},
\end{align*}
and Alladi and Gordon \cite{AG1995} proved that
\begin{align}\label{eq:S-3-1}
&\sum_{\lambda\in\mS}x^{\sharp(\lambda)}y^{\sharp_{0,3}(\lambda)}q^{|\lambda|}\notag\\
&\quad=\sum_{n_1,n_2,n_3\ge 0}\frac{q^{3\binom{n_1}{2}+3\binom{n_2}{2}+6\binom{n_3}{2}+3n_1n_2+3n_2n_3+3n_3n_1+ n_1+2 n_2+3n_3}x^{n_1+n_2+n_3}y^{n_3}}{(q^3;q^3)_{n_1}(q^3;q^3)_{n_2}(q^3;q^3)_{n_3}}.
\end{align}
Recently, using the framework of linked partition ideals, Chern \cite{Che2021} reproduced \eqref{eq:S-3-1} and further proved that if $\mS_S$ denotes the set of partitions in $\mS$ whose smallest part is not in the set $S$ of positive integers, then
\begin{align}
&\sum_{\lambda\in\mS_{\{1\}}}x^{\sharp(\lambda)}y^{\sharp_{0,3}(\lambda)}q^{|\lambda|}\notag\\
&\quad=\sum_{n_1,n_2,n_3\ge 0}\frac{q^{3\binom{n_1}{2}+3\binom{n_2}{2}+6\binom{n_3}{2}+3n_1n_2+3n_2n_3+3n_3n_1+ 4n_1+2 n_2+3n_3}x^{n_1+n_2+n_3}y^{n_3}}{(q^3;q^3)_{n_1}(q^3;q^3)_{n_2}(q^3;q^3)_{n_3}}\label{eq:S-3-2}\\
\intertext{and}
&\sum_{\lambda\in\mS_{\{1,2,3\}}}x^{\sharp(\lambda)}y^{\sharp_{0,3}(\lambda)}q^{|\lambda|}\notag\\
&\quad=\sum_{n_1,n_2,n_3\ge 0}\frac{q^{3\binom{n_1}{2}+3\binom{n_2}{2}+6\binom{n_3}{2}+3n_1n_2+3n_2n_3+3n_3n_1+ 4n_1+5 n_2+6n_3}x^{n_1+n_2+n_3}y^{n_3}}{(q^3;q^3)_{n_1}(q^3;q^3)_{n_2}(q^3;q^3)_{n_3}}.\label{eq:S-3-3}
\end{align}
Notice that \eqref{eq:S-3-1} yields an analytic counterpart of Theorem \ref{th:G} by setting $y=x$:
\begin{align}
&(-xq;q^3)_\infty (-xq^2;q^3)_\infty\notag\\
&\quad=\sum_{n_1,n_2,n_3\ge 0}\frac{q^{3\binom{n_1}{2}+3\binom{n_2}{2}+6\binom{n_3}{2}+3n_1n_2+3n_2n_3+3n_3n_1+ n_1+2 n_2+3n_3}x^{n_1+n_2+2n_3}}{(q^3;q^3)_{n_1}(q^3;q^3)_{n_2}(q^3;q^3)_{n_3}}.
\end{align}

With Theorem \ref{th:A} in mind, our object is the following result.

\begin{theorem}
	We have
	\begin{align}
	\sum_{\lambda\in\mS}x^{\sharp(\lambda)}y^{\sharp_{0,2}(\lambda)}q^{|\lambda|}&=\sum_{n_1,n_2,n_3\ge 0}\frac{(-1)^{n_3}x^{n_1+n_2+2n_3}y^{n_2+n_3}}{(q^2;q^2)_{n_1}(q^2;q^2)_{n_2}(q^6;q^6)_{n_3}}\notag\\
	&\times q^{4\binom{n_1}{2}+4\binom{n_2}{2}+18\binom{n_3}{2}+2n_1n_2+6n_2n_3+6n_3n_1+ n_1+2 n_2+9n_3},\label{eq:S-2-1}\\
	\sum_{\lambda\in\mS_{\{1\}}}x^{\sharp(\lambda)}y^{\sharp_{0,2}(\lambda)}q^{|\lambda|}&=\sum_{n_1,n_2,n_3\ge 0}\frac{(-1)^{n_3}x^{n_1+n_2+2n_3}y^{n_2+n_3}}{(q^2;q^2)_{n_1}(q^2;q^2)_{n_2}(q^6;q^6)_{n_3}}\notag\\
	&\times q^{4\binom{n_1}{2}+4\binom{n_2}{2}+18\binom{n_3}{2}+2n_1n_2+6n_2n_3+6n_3n_1+ 3n_1+2 n_2+9n_3},\label{eq:S-2-2}\\
	\sum_{\lambda\in\mS_{\{1,2,3\}}}x^{\sharp(\lambda)}y^{\sharp_{0,2}(\lambda)}q^{|\lambda|}&=\sum_{n_1,n_2,n_3\ge 0}\frac{(-1)^{n_3}x^{n_1+n_2+2n_3}y^{n_2+n_3}}{(q^2;q^2)_{n_1}(q^2;q^2)_{n_2}(q^6;q^6)_{n_3}}\notag\\
	&\times q^{4\binom{n_1}{2}+4\binom{n_2}{2}+18\binom{n_3}{2}+2n_1n_2+6n_2n_3+6n_3n_1+ 5n_1+4 n_2+15n_3}.\label{eq:S-2-3}
	\end{align}
\end{theorem}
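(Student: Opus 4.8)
The plan is to place $\mS$ inside the framework of linked partition ideals, exactly as Chern \cite{Che2021} does for \eqref{eq:S-3-1}--\eqref{eq:S-3-3}, but now working with modulus $6$ rather than modulus $3$. Modulus $6$ is forced: one must simultaneously respect the congruence obstruction in the difference condition defining $\mS$ (gaps at least $3$, strict at multiples of $3$) and the parity statistic $\sharp_{0,2}$, and it is precisely translating every part of a partition by a common multiple of $6$ that leaves the set $\mS$ and all three statistics $|\lambda|$, $\sharp(\lambda)$, $\sharp_{0,2}(\lambda)$ in control. (Note that one cannot simply specialize \eqref{eq:S-3-1}: translating parts by $3$ respects $\mS$ and $\sharp_{0,3}$ but flips parities, so the modulus-$3$ generating functions do not carry the parity information we need.)

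First I would carry out the smallest-part decomposition. For a nonempty $\lambda\in\mS$ with smallest part $a$, delete $a$ and translate the remaining parts down by the largest multiple of $6$ that keeps all parts positive; sorting on the residue $a\bmod 6$ and on the parity of $a$, the resulting partition lies in one of $\mS$, $\mS_{\{1\}}$, $\mS_{\{1,2,3\}}$, or the auxiliary set $\mT$ of partitions in $\mS$ with smallest part at least $5$, and a monomial in $x$ (for the deleted part), $y$ (if $a$ is even), and $q$ (for $a$ together with the translation) is accumulated; the strictness clause at multiples of $3$ enters as a family of exceptional subcases forbidding certain small gaps. This produces a closed linear system of $q$-difference equations relating $G_{\mS}(x,y,q)$, $G_{\mS_{\{1\}}}(x,y,q)$, $G_{\mS_{\{1,2,3\}}}(x,y,q)$, $G_{\mT}(x,y,q)$ to their values at $(xq^{6},y,q)$, with explicit monomial coefficients. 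Since a partition with smallest part at least $5$ has smallest part $5$, $6$, or at least $7$, the unknown $G_{\mT}$ can be eliminated in favour of $G_{\mS}(xq^{6},\cdot)$, $G_{\mS_{\{1\}}}(xq^{6},\cdot)$, $G_{\mS_{\{1,2,3\}}}(xq^{6},\cdot)$, leaving a $3\times 3$ system $\vec G(x)=\vec v(x)+\mathbf M(x)\,\vec G(xq^{6})$ in the three generating functions of interest.

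Next I would solve this system. Following the linked-partition-ideal calculus of \cite{Che2021}, iterating and resumming turns $\vec G(x)=\sum_{k\ge0}\bigl(\mathbf M(x)\mathbf M(xq^{6})\cdots\mathbf M(xq^{6(k-1)})\bigr)\vec v(xq^{6k})$ into an Andrews--Gordon type multisum, in which the Pochhammer triple $(q^{2};q^{2})_{n_1}(q^{2};q^{2})_{n_2}(q^{6};q^{6})_{n_3}$, the quadratic form $4\binom{n_1}{2}+4\binom{n_2}{2}+18\binom{n_3}{2}+2n_1n_2+6n_2n_3+6n_3n_1$, the linear parts, and the sign are all dictated by $\mathbf M$ and $\vec v$. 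In practice it should be more economical to read off the candidate right-hand sides \eqref{eq:S-2-1}--\eqref{eq:S-2-3} from the shape of $\mathbf M$ and $\vec v$ (guided also by the analogy with \eqref{eq:S-3-1}--\eqref{eq:S-3-3} and with the Andrews--Bringmann--Mahlburg series quoted above) and then to verify directly that these series satisfy the $3\times 3$ system: substituting a candidate, split off one summand using $(q^{2};q^{2})_{n}=(1-q^{2n})(q^{2};q^{2})_{n-1}$ and $(q^{6};q^{6})_{n}=(1-q^{6n})(q^{6};q^{6})_{n-1}$, reindex $n_i\mapsto n_i-1$, and reduce the required equality to a polynomial identity in $q$ checked termwise. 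Since $G_{\bullet}(0,y,q)=1$ for each of the three sets and every entry of $\mathbf M(x,y,q)$ vanishes coefficientwise as $x\to0$, the system has a unique power-series solution in $x$, so the verification closes the argument; the sign $(-1)^{n_3}$ should be traceable to the negative term that the ``no two consecutive multiples of $3$'' restriction forces into the part of the recursion governing contributions from multiples of $3$ (equivalently, to an inclusion--exclusion removing the forbidden pattern).

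The genuine difficulty is organizational rather than conceptual. Relative to the modulus-$3$ treatment behind \eqref{eq:S-3-1}--\eqref{eq:S-3-3}, one now tracks each part modulo $6$, which roughly doubles the smallest-part case analysis, and on top of the exceptional subcases forced by the strictness at multiples of $3$ the statistic $\sharp_{0,2}$ must be carried consistently through every translation by $6$. Pinning down the matrix $\mathbf M$, the inhomogeneous vector $\vec v$, and the elimination of $G_{\mT}$ exactly right --- so that the two $(q^{2};q^{2})$ factors, the single $(q^{6};q^{6})$ factor, and the sign $(-1)^{n_3}$ all come out correctly --- is where a slip is most likely; once these are fixed, the resummation and the verification are routine.
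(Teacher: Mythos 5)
Your overall strategy matches the paper's in outline: realize $\mS$ as a modulus-$6$ span one linked partition ideal, extract a closed $3\times 3$ system of $q$-difference equations with shift $x\mapsto xq^{6}$ for the generating functions of $\mS$, $\mS_{\{1\}}$ and $\mS_{\{1,2,3\}}$, guess the triple sums, and conclude by uniqueness of the power-series solution given the value at $x=0$. One structural remark before the main objection: the paper peels off an entire block at a time (all parts in $[6i+1,6i+6]$ form block $B_i$), which is what produces a homogeneous system $\vec A(x)=\mM\,\vec A(xq^{6})$ with polynomial entries after collapsing a $12\times 12$ system to $3\times 3$. Your smallest-part decomposition as literally stated --- delete the smallest part and translate by ``the largest multiple of $6$ that keeps all parts positive'' --- does not give a uniform shift $x\mapsto xq^{6}$, because the translation amount depends on the second-smallest part; you would need to reorganize (say, by working with all six sets ``smallest part $\ge c$'', $1\le c\le 6$, and using that ``smallest part $\ge c+6$'' is the $c$-version at $xq^{6}$). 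This is repairable, but it is not the system you wrote down.

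The genuine gap is in the verification step, which you describe as routine: substitute the candidates into the system, split off Pochhammer factors, reindex, and check a polynomial identity. The paper's own treatment shows this is where all the difficulty lives. For the main theorem the authors do \emph{not} verify the system directly; they reduce the $3\times 3$ system to a single scalar $q$-difference equation relating $A_1(x)$, $A_1(xq^{6})$ and $A_1(xq^{12})$, convert it to a recurrence for the coefficients $a(M)$, compute (via \texttt{qMultiSum}) a recurrence for the coefficients of the candidate sum, merge the two recurrences using the Kauers--Koutschan closure property, and check five initial values. The direct $q$-series verification you propose is carried out in the paper only for the one-variable specialization $y=x$, and even there the single identity $\Sigma(1,2,9)=(1+xq+x^{2}q^{2})\Sigma(3,6,15)$ requires decomposing $1$ into nine carefully tailored pieces $I_1,\ldots,I_9$ compatible with the shift lemma. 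Producing the analogous decompositions for the full two-variable three-equation system is the actual content of the proof, and your proposal asserts rather than supplies them. The route is viable in principle (the shift lemma generalizes verbatim to the $y$-refined sums, and uniqueness then closes the argument), but as written the decisive identities are missing.
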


In particular, \eqref{eq:S-2-1} implies a corollary as follows.

\begin{corollary}\label{coro:A}
	Theorem \ref{th:A} is true. Also, it has an analytic counterpart
	\begin{align}\label{eq:A-analytic}
	\prod_{n\ge 0}(1+x q^{2n+1}+x^2 q^{4n+2})&=\sum_{n_1,n_2,n_3\ge 0}\frac{(-1)^{n_3}x^{n_1+2n_2+3n_3}}{(q^2;q^2)_{n_1}(q^2;q^2)_{n_2}(q^6;q^6)_{n_3}}\notag\\
	&\times q^{4\binom{n_1}{2}+4\binom{n_2}{2}+18\binom{n_3}{2}+2n_1n_2+6n_2n_3+6n_3n_1+ n_1+2 n_2+9n_3}.
	\end{align}
\end{corollary}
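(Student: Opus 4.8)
The plan is to obtain Corollary~\ref{coro:A} by specializing \eqref{eq:S-2-1} at $y=x$ and then evaluating the resulting multisum in product form. Setting $y=x$ in \eqref{eq:S-2-1}, the power of $x$ in the summand collapses from $x^{n_1+n_2+2n_3}y^{n_2+n_3}$ to $x^{n_1+2n_2+3n_3}$ and the $q$-exponent is unaffected, so the right-hand side becomes the right-hand side of \eqref{eq:A-analytic}; meanwhile the left-hand side becomes $\sum_{\lambda\in\mS}x^{\sharp(\lambda)+\sharp_{0,2}(\lambda)}q^{|\lambda|}$, whose $x^mq^n$-coefficient is exactly $D'(m,n)$ by the definition in Theorem~\ref{th:A}. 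On the other hand, recording for each $j\ge0$ the multiplicity $0$, $1$, or $2$ of the odd part $2j+1$ gives $\prod_{n\ge0}(1+xq^{2n+1}+x^2q^{4n+2})=\sum_{m,n\ge0}C(m,n)x^mq^n$, and via $1+z+z^2=(1-z^3)/(1-z)$ this product equals $(x^3q^3;q^6)_\infty/(xq;q^2)_\infty$. Hence everything reduces to the single $q$-series identity that the triple sum $F(x)$ on the right of \eqref{eq:A-analytic} equals $\Pi(x):=\prod_{n\ge0}(1+xq^{2n+1}+x^2q^{4n+2})$: granting it, comparing $x^mq^n$-coefficients gives $D'(m,n)=C(m,n)$, which is Theorem~\ref{th:A}.

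I would prove $F=\Pi$ through a first-order $q$-difference equation in $x$. Pulling off the $n=0$ factor, $\Pi$ manifestly satisfies $\Pi(x)=(1+xq+x^2q^2)\,\Pi(xq^2)$ with $\Pi(0)=1$, and a formal power series $G(x)$ with $G(0)=1$ solving $G(x)=(1+xq+x^2q^2)G(xq^2)$ is unique, since the relation forces $(1-q^{2m})[x^m]G$ to be a combination of $[x^{m-1}]G$ and $[x^{m-2}]G$ with coefficients depending only on $m$. So it suffices to check that $F$ satisfies the same equation with $F(0)=1$. The normalization is immediate (only $n_1=n_2=n_3=0$ contributes to $x^0$), and the equation would be established by extracting the coefficient of $x^m$ in $F(x)$ and in $(1+xq+x^2q^2)F(xq^2)$ and matching them by shifts of the summation indices. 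A useful normal form comes from completing the square: the $q$-exponent in the summand equals $M^2+2\binom{n_1}{2}+n_2^2$ with $M=n_1+n_2+3n_3$, while the power of $x$ equals $M+n_2$, so $F(x)=\sum_{M\ge0}q^{M^2}x^M\sum_{n_1+n_2+3n_3=M}\frac{(-1)^{n_3}q^{2\binom{n_1}{2}+n_2^2}x^{n_2}}{(q^2;q^2)_{n_1}(q^2;q^2)_{n_2}(q^6;q^6)_{n_3}}$, reducing the task to a recurrence in $M$ for the inner coefficients.

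The main obstacle is exactly this verification: the relation $F(x)=(1+xq+x^2q^2)F(xq^2)$ is transparent on the odd-parts side but encodes the refinement of the Alladi--Schur theorem, which has no visible counterpart on the $\mS$ side, so it must be teased out of the explicit shape of the multisum --- the sign $(-1)^{n_3}$, the mixed bases $(q^2;q^2)_{n_1}(q^2;q^2)_{n_2}(q^6;q^6)_{n_3}$, and the cross terms $6n_2n_3+6n_3n_1$ --- and one should expect nontrivial index-shift bookkeeping (or, alternatively, a derivation of the identity as a limiting case of a known Bailey pair or basic hypergeometric transformation). If one is content to invoke Theorem~\ref{th:A} itself as already known, the argument collapses: $\sum_{m,n}D'(m,n)x^mq^n=\sum_{m,n}C(m,n)x^mq^n$ combined with the $y=x$ case of \eqref{eq:S-2-1} immediately yields \eqref{eq:A-analytic}.
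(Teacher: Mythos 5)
Your reduction is sound and coincides with the paper's own setup: setting $y=x$ in \eqref{eq:S-2-1} identifies the left-hand side with $\sum_{m,n}D'(m,n)x^mq^n$, the product $\prod_{n\ge0}(1+xq^{2n+1}+x^2q^{4n+2})$ generates $C(m,n)$, and both sides of \eqref{eq:A-analytic} should satisfy the first-order equation $G(x)=(1+xq+x^2q^2)G(xq^2)$, whose formal power series solution with $G(0)=1$ is unique (your coefficient argument for uniqueness is correct, as is your completed-square form $M^2+2\binom{n_1}{2}+n_2^2$ of the $q$-exponent with $M=n_1+n_2+3n_3$). This is exactly the strategy of the paper's first proof.

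The gap is that you never establish the functional equation for the triple sum $F$; you state explicitly that this verification is the ``main obstacle'' and offer only a plan (``matching them by shifts of the summation indices''), plus an escape hatch of invoking Theorem \ref{th:A} as already known, which undercuts the corollary's first assertion that Theorem \ref{th:A} is a consequence of \eqref{eq:S-2-1}. That verification is the entire mathematical content of the proof, and it is not a routine coefficient match: in the paper it is carried out (Lemma \ref{le:sigma} together with Theorem \ref{eq:Sigma-1-2-9}) by decomposing $1=I_1+\cdots+I_9$ into nine carefully chosen polynomials in $q^{2n_1}$, $q^{2n_2}$, $q^{6n_3}$, each of which, multiplied against the summand $\sigma(1,2,9)$ and summed, yields via the shift lemma an explicit monomial multiple of some $\Sigma(\beta_1,\beta_2,\beta_3)$; the pairs $I_4+I_5$, $I_6+I_7$, $I_8+I_9$ cancel and the survivors assemble into $(1+xq+x^2q^2)\Sigma(3,6,15)$. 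Finding that decomposition is the nontrivial step your normal form does not supply. The paper also gives a second, computer-assisted route: \texttt{qMultiSum} produces a sixth-order recurrence for $s(M)$, equivalently the third-order $q$-difference equation of Corollary \ref{coro:S-diff}, which the product side is easily checked to satisfy. One of these (or the Bailey-pair derivation you mention but do not supply) must actually be executed for the proof to be complete.
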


Our paper is organized as follows. In Section \ref{sec:LPI}, we give a brief review of linked partition ideals and use this framework to construct a matrix equation for generating functions related to the partition set $\mS$. Then in Sections \ref{sec:q-diff}--\ref{sec:S-2-1}, we present a proof of \eqref{eq:S-2-1}. The basic idea comes from the work of Chern and Li \cite{CL2020}. For \eqref{eq:S-2-2} and \eqref{eq:S-2-3}, we may proceed in a similar way so the details will be omitted. Next, we prove Corollary \ref{coro:A} in Section \ref{sec:coro}. Here, we provide two proofs --- one is $q$-hypergeometric and the other is computer-assisted. Finally, we close this paper with a conclusion in Section \ref{sec:conclusion}.

\section{Linked partition ideals and a matrix equation}\label{sec:LPI}

\subsection{Span one linked partition ideals}

The concept of linked partition ideals was introduced by Andrews \cite{And1972,And1974b,And1975} in the 1970s for a general theory of partition identities. We refer the interested reader to \cite[Chapter 8]{And1976} for a detailed account. Recently, Chern and Li \cite{CL2020} and Chern \cite{Che2020} revisited a special type of linked partition ideals, called \textit{span one linked partition ideals}, in their study of Andrews--Gordon type series for several partition sets.

\begin{definition}
	Assume that we are given
	\begin{itemize}[leftmargin=*,align=left]
		\renewcommand{\labelitemi}{\scriptsize$\blacktriangleright$}
		
		\item a finite set $\Pi=\{\pi_1,\pi_2,\ldots,\pi_K\}$ of integer partitions with $\pi_1=\emptyset$, the empty partition,
		
		\item a \textit{map of linking sets}, $\cL:\Pi\to P(\Pi)$, the power set of $\Pi$, with especially, $\cL(\pi_1)=\cL(\emptyset)=\Pi$ and $\pi_1=\emptyset\in \cL(\pi_k)$ for any $1\le k\le K$,
		
		\item and a positive integer $T$, called the \textit{modulus}, which is greater than or equal to the largest part among all partitions in $\Pi$.
	\end{itemize}
	We say a \textit{span one linked partition ideal} $\mI=\mI(\langle\Pi,\cL\rangle,T)$ is the collection of all partitions of the form
	\begin{align}\label{eq:decomp}
	\lambda&=\phi^0(\lambda_0)\oplus \phi^T(\lambda_1)\oplus \cdots \oplus \phi^{NT}(\lambda_N)\oplus \phi^{(N+1)T}(\pi_1)\oplus \phi^{(N+2)T}(\pi_1)\oplus \cdots\notag\\
	&=\phi^0(\lambda_0)\oplus \phi^T(\lambda_1)\oplus \cdots \oplus \phi^{NT}(\lambda_N),
	\end{align}
	where $\lambda_i\in\cL(\lambda_{i-1})$ for each $i$ and $\lambda_N$ is not the empty partition. We also include in $\mI$ the empty partition, which corresponds to $\phi^{0}(\pi_1)\oplus \phi^{T}(\pi_1)\oplus \cdots$. Here for any two partitions $\mu$ and $\nu$, $\mu\oplus\nu$ gives a partition by collecting all parts in $\mu$ and $\nu$, and $\phi^m(\mu)$ gives a partition by adding $m$ to each part of $\mu$.
\end{definition}

It was shown in \cite{Che2021} that the partition set $\mS$ in Schur's theorem is the span one linked partition ideal $\mI(\langle\Pi',\cL'\rangle,3)$, where
$\Pi'=\{\varpi_1=\emptyset,\varpi_2=(1),\varpi_3=(2),\varpi_4=(3)\}$ and
\begin{equation*}
\left\{
\begin{aligned}
\cL'(\varpi_1)&=\{\varpi_1,\varpi_2,\varpi_3,\varpi_4\},\\
\cL'(\varpi_2)&=\{\varpi_1,\varpi_2,\varpi_3,\varpi_4\},\\
\cL'(\varpi_3)&=\{\varpi_1,\varpi_3,\varpi_4\},\\
\cL'(\varpi_4)&=\{\varpi_1\}.
\end{aligned}
\right.
\end{equation*}
However, this equivalence is not sufficient when even parts are taken into consideration. Therefore, our first goal is the following refinement.

\begin{lemma}
	$\mS$ is also the span one linked partition ideal $\mI(\langle\Pi,\cL\rangle,6)$, where
	$\Pi=\{\pi_1=\emptyset,\pi_2=(1),\pi_3=(2),\pi_4=(3),\pi_5=(4),\pi_6=(1+4),\pi_7=(5),\pi_8=(1+5),\pi_9=(2+5),\pi_{10}=(6),\pi_{11}=(1+6),\pi_{12}=(2+6)\}$ and
	\begin{equation*}
	\left\{
	\begin{aligned}
	\cL(\pi_1)=\cL(\pi_2)=\cdots=\cL(\pi_6)&=\{\pi_1,\pi_2,\ldots,\pi_{12}\},\\
	\cL(\pi_7)=\cL(\pi_8)=\cL(\pi_9)&=\{\pi_1,\pi_3,\pi_4,\pi_5,\pi_7,\pi_9,\pi_{10},\pi_{12}\},\\
	\cL(\pi_{10})=\cL(\pi_{11})=\cL(\pi_{12})&=\{\pi_1,\pi_5,\pi_7,\pi_{10}\}.
	\end{aligned}
	\right.
	\end{equation*}
\end{lemma}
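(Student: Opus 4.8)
The plan is to verify that the span one linked partition ideal $\mI(\langle\Pi,\cL\rangle,6)$ produces exactly the partitions enumerated by $D(n)$. The key observation is that the known description $\mS=\mI(\langle\Pi',\cL'\rangle,3)$ has modulus $3$, so each ``block'' $\phi^{3j}(\varpi)$ of the old decomposition carries a single part from $\{0,1,2,3\}+3j$; two consecutive blocks together occupy a window of $6$ consecutive integers. I would first show that a decomposition of type \eqref{eq:decomp} with modulus $6$ can be obtained by \emph{amalgamating pairs of consecutive blocks} of the modulus-$3$ decomposition: write $N=2M$ or $N=2M+1$ and group $\bigl(\phi^{3(2i)}(\varpi_{k_{2i}}), \phi^{3(2i+1)}(\varpi_{k_{2i+1}})\bigr)$ into one partition $\mu_i$ supported on $\{1,\dots,6\}$ (after subtracting $6i$). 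The possible values of $\mu_i$ are precisely the partitions obtained by choosing $a\in\{0,1,2,3\}$ (from the even block, no part if $a=0$) and $b\in\{0,1,2,3\}$ (from the odd block, contributing the part $b+3$ if $b\neq 0$), subject to the original linking constraints of $\cL'$ \emph{within} the pair. Enumerating these legal pairs and discarding the empty-tail ones, I claim one obtains exactly the twelve partitions $\pi_1,\dots,\pi_{12}$ listed.

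Concretely, I would build a $4\times 4$ table of pairs $(a,b)$: the pair $(a,b)$ is allowed iff $\varpi_{b'} \in \cL'(\varpi_{a'})$ where $a',b'$ are the indices of the single-part partitions corresponding to $a,b$ (with $0\mapsto\varpi_1$). Reading off $\cL'$: if $a=0$ any $b$ is allowed; if $a=1$ any $b$ is allowed; if $a=2$ then $b\in\{0,2,3\}$; if $a=3$ then $b=0$. The resulting partition of the window is $\emptyset$ if $(a,b)=(0,0)$, the singleton $(a)$ if $b=0,a\neq0$, the singleton $(b+3)$ if $a=0,b\neq0$, and the two-part partition $(a,b+3)$ otherwise. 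Listing: $(0,0)\to\emptyset=\pi_1$; $(1,0)\to(1)=\pi_2$; $(2,0)\to(2)=\pi_3$; $(3,0)\to(3)=\pi_4$; $(0,1)\to(4)=\pi_5$; $(1,1)\to(1+4)=\pi_6$; $(0,2)\to(5)=\pi_7$; $(1,2)\to(1+5)=\pi_8$; $(2,2)\to(2+5)=\pi_9$; $(0,3)\to(6)=\pi_{10}$; $(1,3)\to(1+6)=\pi_{11}$; $(2,3)\to(2+6)=\pi_{12}$. These are exactly the twelve elements of $\Pi$, confirming the vertex set.

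Next I would determine the new linking map $\cL$. A transition from window $i$ to window $i+1$ corresponds in the modulus-$3$ picture to a transition from the odd block $b$ of pair $i$ to the even block $a'$ of pair $i+1$, followed by an internal transition $a'\to b'$ inside pair $i+1$. So $\pi_\ell\in\cL(\pi_k)$ iff, writing $\pi_k\leftrightarrow(a,b)$ and $\pi_\ell\leftrightarrow(a',b')$, both $\varpi_{a'}\in\cL'(\varpi_b)$ (the cross-window constraint, depending only on $b$) and $(a',b')$ is an internally legal pair. Since the cross-window constraint depends on $k$ only through $b$, the partitions $\pi_k$ split into three groups by their value of $b$: $b=0$ gives $\{\pi_1,\pi_2,\pi_3,\pi_4\}$ (and $\cL'(\varpi_1)$ allows every $a'$), $b=1$ gives $\{\pi_5,\pi_6\}$ (and $\cL'(\varpi_2)$ allows every $a'$), $b=2$ gives $\{\pi_7,\pi_8,\pi_9\}$ (and $\cL'(\varpi_3)=\{\varpi_1,\varpi_3,\varpi_4\}$ forces $a'\in\{0,2,3\}$), $b=3$ gives $\{\pi_{10},\pi_{11},\pi_{12}\}$ (and $\cL'(\varpi_4)=\{\varpi_1\}$ forces $a'=0$). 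Intersecting each of these constraints with the set of internally legal pairs and translating back to $\pi$-indices yields exactly the three lines of the displayed definition of $\cL$; e.g.\ $a'=0$ together with any legal $b'$ gives $\{\pi_1,\pi_5,\pi_7,\pi_{10}\}$, matching $\cL(\pi_{10})=\cL(\pi_{11})=\cL(\pi_{12})$, while $a'\in\{0,2,3\}$ with legal $b'$ gives $\{\pi_1,\pi_3,\pi_4,\pi_5,\pi_7,\pi_9,\pi_{10},\pi_{12}\}$, matching the middle line. Finally I would check the boundary bookkeeping: the modulus-$6$ decomposition must terminate with $\lambda_N\neq\emptyset$, and I would verify that grouping into pairs never creates or destroys a trailing empty window, by treating separately the cases where the modulus-$3$ decomposition has an odd versus even number of nonempty blocks (an odd tail $\varpi_{k_{2M}}\neq\emptyset$ pairs with $b=0$, still nonempty). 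The main obstacle is purely bookkeeping: confirming that the internal-pair legality and the cross-window condition, when intersected, reproduce precisely the three listed linking sets with no spurious inclusions or omissions — this is a finite but slightly delicate case check, and I would present it as the $4\times4$ table above plus the three grouping computations rather than prose.
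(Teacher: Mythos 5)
Your proof is correct, but it takes a genuinely different route from the paper's. The paper argues directly from the defining difference conditions of $D(n)$: it decomposes a partition into windows $B_i$ of parts in $\{6i+1,\dots,6i+6\}$, checks by hand that $\phi^{-6i}(B_i)\in\Pi$, and reads off each linking set from the gap condition ``$\mu_j-\mu_{j+1}\ge 3$, strict if $3\mid\mu_j$'' applied to the largest part of $B_i$ and the smallest part of $B_{i+1}$. You instead take as input the previously established modulus-$3$ description $\mS=\mI(\langle\Pi',\cL'\rangle,3)$ and perform a purely formal ``modulus doubling'': amalgamate consecutive block pairs, identify $\Pi$ with the internally legal pairs $(a,b)$, and obtain $\cL$ by intersecting the cross-window constraint (which depends only on $b$, since the modulus-$3$ ideal only links adjacent blocks) with internal legality. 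Your table of legal pairs and the three resulting groups ($b\in\{0,1\}$, $b=2$, $b=3$) do reproduce $\Pi$ and the three displayed linking sets exactly, and your boundary remark about trailing empty blocks is the right check. What your approach buys is an explanation of \emph{why} there are exactly twelve elements of $\Pi$ and why the linking sets collapse into three classes, plus a template that generalizes to rescaling any span one linked partition ideal by an integer multiple of its modulus; what it costs is dependence on the cited modulus-$3$ result, whereas the paper's argument is self-contained from the combinatorial definition of $\mS$. Either is acceptable; if you use yours, state explicitly that you are invoking the result of \cite{Che2021} rather than the difference conditions directly.
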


\begin{proof}
	We decompose each partition in $\mS$ into blocks $B_0, B_1, \ldots$ such that all parts between $6i+1$ and $6i+6$ fall into block $B_i$. By the definition of $\mS$, we find that if we apply the operator $\phi^{-6i}$ to the block $B_i$, then it is among $\Pi$. If $\phi^{-6i}(B_i)$ is any of $\{\pi_1,\pi_2,\ldots,\pi_6\}$, then the largest part in block $B_i$ is at most $6i+4$, and therefore $\phi^{-6(i+1)}(B_{i+1})$ can be any among $\Pi$. If $\phi^{-6i}(B_i)$ is any of $\{\pi_7,\pi_8,\pi_9\}$, then the largest part in block $B_i$ is $6i+5$, and therefore the smallest part in block $B_{i+1}$ is at least $6(i+1)+2$, that is $\phi^{-6(i+1)}(B_{i+1})\not\in \{\pi_2,\pi_6,\pi_8,\pi_{11}\}$. If $\phi^{-6i}(B_i)$ is any of $\{\pi_{10},\pi_{11},\pi_{12}\}$, then the largest part in block $B_i$ is $6i+6$, and therefore the smallest part in block $B_{i+1}$ is at least $6(i+1)+4$, that is $\phi^{-6(i+1)}(B_{i+1})\not\in \{\pi_2,\pi_3,\pi_4,\pi_6,\pi_8,\pi_9,\pi_{11},\pi_{12}\}$. Hence, all partitions in $\mS$ are also in $\mI(\langle\Pi,\cL\rangle,6)$. Conversely, it is straightforward to verify that all partitions in $\mI(\langle\Pi,\cL\rangle,6)$ satisfy the difference conditions defined for $D(n)$, and are thus in $\mS$.
\end{proof}

\subsection{A matrix equation}

For any partition $\lambda\in \mS=\mI(\langle\Pi,\cL\rangle,6)$, we always decompose it as in \eqref{eq:decomp}. For each $1\le k\le 12$, we write
$$G_k(x):=\sum_{\substack{\lambda\in\mS\\\lambda_0=\pi_k}}x^{\sharp(\lambda)}y^{\sharp_{0,2}(\lambda)}q^{|\lambda|},$$
that is, $G_k(x)$ is the generating function for partitions whose first decomposed block is $\pi_k$. By the definition of span one linked partition ideals, we have
\begin{align}
G_k(x)=x^{\sharp(\pi_k)}y^{\sharp_{0,2}(\pi_k)}q^{|\pi_k|}\sum_{j:\pi_j\in\cL(\pi_k)} G_j(xq^6).
\end{align}
Therefore,
\begin{equation}
\begin{pmatrix}
G_1(x)\\
G_2(x)\\
\vdots\\
G_{12}(x)
\end{pmatrix}
=
\mW(x).\mA.
\begin{pmatrix}
G_1(xq^6)\\
G_2(xq^6)\\
\vdots\\
G_{12}(xq^6)
\end{pmatrix}
\end{equation}
where
$$\mW(x)=\diag(1,xq,xyq^2,xq^3,xyq^4,x^2yq^5,xq^5,x^2q^6,x^2yq^7,xyq^6,x^2yq^7,x^2y^2q^8)$$
and
$$\setcounter{MaxMatrixCols}{15}
\mA=\begin{pmatrix}
1 & 1 & 1 & 1 & 1 & 1 & 1 & 1 & 1 & 1 & 1 & 1\\
1 & 1 & 1 & 1 & 1 & 1 & 1 & 1 & 1 & 1 & 1 & 1\\
1 & 1 & 1 & 1 & 1 & 1 & 1 & 1 & 1 & 1 & 1 & 1\\
1 & 1 & 1 & 1 & 1 & 1 & 1 & 1 & 1 & 1 & 1 & 1\\
1 & 1 & 1 & 1 & 1 & 1 & 1 & 1 & 1 & 1 & 1 & 1\\
1 & 1 & 1 & 1 & 1 & 1 & 1 & 1 & 1 & 1 & 1 & 1\\
1 & 0 & 1 & 1 & 1 & 0 & 1 & 0 & 1 & 1 & 0 & 1\\
1 & 0 & 1 & 1 & 1 & 0 & 1 & 0 & 1 & 1 & 0 & 1\\
1 & 0 & 1 & 1 & 1 & 0 & 1 & 0 & 1 & 1 & 0 & 1\\
1 & 0 & 0 & 0 & 1 & 0 & 1 & 0 & 0 & 1 & 0 & 0\\
1 & 0 & 0 & 0 & 1 & 0 & 1 & 0 & 0 & 1 & 0 & 0\\
1 & 0 & 0 & 0 & 1 & 0 & 1 & 0 & 0 & 1 & 0 & 0
\end{pmatrix}.$$

We further write
\begin{align}\label{eq:F-G}
\begin{pmatrix}
F_1(x)\\
F_2(x)\\
\vdots\\
F_{12}(x)
\end{pmatrix}=\mA.\begin{pmatrix}
G_1(x)\\
G_2(x)\\
\vdots\\
G_{12}(x)
\end{pmatrix}.
\end{align}
Then
\begin{align*}
\begin{pmatrix}
F_1(x)\\
F_2(x)\\
\vdots\\
F_{12}(x)
\end{pmatrix} = \mA.\mW(x).\mA.
\begin{pmatrix}
G_1(xq^6)\\
G_2(xq^6)\\
\vdots\\
G_{12}(xq^6)
\end{pmatrix},
\end{align*}
and therefore,
\begin{align}\label{eq:F}
\begin{pmatrix}
F_1(x)\\
F_2(x)\\
\vdots\\
F_{12}(x)
\end{pmatrix}=\mA.\mW(x).\begin{pmatrix}
F_1(xq^6)\\
F_2(xq^6)\\
\vdots\\
F_{12}(xq^6)
\end{pmatrix}.
\end{align}

Also, \eqref{eq:F-G} implies that
\begin{align}
F_1(x)=F_2(x)=\cdots=F_6(x)&=:A_1(x),\\
F_7(x)=F_8(x)=F_9(x)&=:A_2(x),\\
F_{10}(x)=F_{11}(x)=F_{12}(x)&=:A_3(x).
\end{align}
We may rewrite \eqref{eq:F} as
\begin{equation}\label{eq:A}
\begin{pmatrix}
A_1(x)\\
A_2(x)\\
A_3(x)
\end{pmatrix} = 
\mM.
\begin{pmatrix}
A_1(xq^6)\\
A_2(xq^6)\\
A_3(xq^6)
\end{pmatrix}
\end{equation}
where
\begin{equation*}
\scalebox{0.875}{%
	$\mM = 
	\begin{pmatrix}
	1+xq+xyq^2+xq^3+xyq^4+x^2yq^5 & xq^5+x^2q^6+x^2yq^7 & xyq^6+x^2yq^7+x^2y^2q^8\\
	1+xyq^2+xq^3+xyq^4 & xq^5+x^2yq^7 & xyq^6+x^2y^2q^8\\
	1+xyq^4 & xq^5 & xyq^6\\
	\end{pmatrix}.$}
\end{equation*}

Finally, we observe that
\begin{align}
\sum_{\lambda\in\mS}x^{\sharp(\lambda)}y^{\sharp_{0,2}(\lambda)}q^{|\lambda|}&=\sum_{k\in\{1,2,3,\ldots,12\}}G_k(x)\notag\\
&=A_1(x),\label{eq:S-A-1}\\
\sum_{\lambda\in\mS_{\{1\}}}x^{\sharp(\lambda)}y^{\sharp_{0,2}(\lambda)}q^{|\lambda|}&=\sum_{k\in\{1,3,4,5,7,9,10,12\}}G_k(x)\notag\\
&=A_2(x),\label{eq:S-A-2}\\
\sum_{\lambda\in\mS_{\{1,2,3\}}}x^{\sharp(\lambda)}y^{\sharp_{0,2}(\lambda)}q^{|\lambda|}&=\sum_{k\in\{1,5,7,10\}}G_k(x)\notag\\
&=A_3(x).\label{eq:S-A-3}
\end{align}

\section{A $q$-difference equation}\label{sec:q-diff}

In Sections \ref{sec:q-diff}--\ref{sec:S-2-1}, we present a proof of \eqref{eq:S-2-1}, following the idea in \cite{CL2020}. We will omit the proofs of \eqref{eq:S-2-2} and \eqref{eq:S-2-3}. However, all related \textit{Mathematica} codes were uploaded at the following url:
{\small
	\begin{center}
		\url{https://github.com/shanechern/Alladi-Schur}
	\end{center}
}

In order to reduce matrix equations such as \eqref{eq:A} to $q$-difference equations of $A_1(x)$, $A_2(x)$ and $A_3(x)$, an algorithm was presented in \cite[Section 3]{CL2020}, following the idea in \cite[Lemma 8.10]{And1976}. So our first result is as follows.

\begin{theorem}\label{th:A-q-diff-eqn}
	Let $A_1(x)$ be as in \eqref{eq:A}. Then
	\begin{equation*}
	\scalebox{0.95}{%
	$\begin{aligned}
	0 &= \big[1+x (q^7+ yq^8)\big] A_1(x)\\
	&\quad- \big[1 + x (q + q^3 + q^5 + q^7 + yq^2 + yq^4 +  yq^6 + yq^8)\\
	&\quad\quad+ 
	x^2 (q^6 + q^8 + q^{10} + yq^5 + 2 yq^7 + 2 yq^9 + 2 yq^{11} + yq^{13} + y^2q^8 + y^2q^{10} + y^2q^{12})\\
	&\quad\quad+ 
	x^3 (yq^{12} + yq^{14} + y^2q^{13} + y^2q^{15})\big]A_1(xq^6)\\
	&\quad+\big[x^2 yq^{15} +x^3 (-q^{21}+ yq^{16}+ y^2q^{17}- y^3q^{24})\\
	&\quad\quad+x^4 (-q^{22}- yq^{23}+ y^2q^{30}- y^3q^{25}- y^4q^{26})\\
	&\quad\quad+x^5 (y^2q^{31} + y^3q^{32})\big]A_1(xq^{12}).
	\end{aligned}
	$}
	\end{equation*}
\end{theorem}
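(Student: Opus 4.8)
The plan is to eliminate $A_2(x)$ and $A_3(x)$ from the matrix equation \eqref{eq:A} and thereby obtain a single (necessarily higher-order) $q$-difference equation for $A_1(x)$, exactly as in the elimination algorithm of \cite[Section 3]{CL2020} (itself modeled on \cite[Lemma 8.10]{And1976}). Concretely, write \eqref{eq:A} as $\mathbf{A}(x)=\mM(x)\,\mathbf{A}(xq^6)$, where $\mathbf{A}(x)=(A_1(x),A_2(x),A_3(x))^{\mathsf T}$. The first step is to iterate: applying \eqref{eq:A} again gives $\mathbf{A}(x)=\mM(x)\mM(xq^6)\,\mathbf{A}(xq^{12})$, and once more $\mathbf{A}(x)=\mM(x)\mM(xq^6)\mM(xq^{12})\,\mathbf{A}(xq^{18})$. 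Since the unknown vector lives in a $3$-dimensional space, the four vectors $\mathbf{A}(x),\mathbf{A}(xq^6),\mathbf{A}(xq^{12}),\mathbf{A}(xq^{18})$ — or rather the rows extracting $A_1$ at these four dilations — must be linearly dependent over the field $\mathbb{Q}(x,q,y)$; taking an appropriate $4\times 4$ determinant of the coefficient rows and clearing denominators yields a relation among $A_1(x),A_1(xq^6),A_1(xq^{12}),A_1(xq^{18})$. A small miracle one hopes for (and which the stated theorem confirms) is that the $A_1(xq^{18})$ coefficient drops out, so the equation is in fact only third-order in the dilation $x\mapsto xq^6$, involving $A_1(x),A_1(xq^6),A_1(xq^{12})$.

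In more detail, I would proceed as follows. From the last two rows of \eqref{eq:A} one gets
\[
\begin{pmatrix}A_2(x)\\A_3(x)\end{pmatrix}
=\begin{pmatrix}1+xyq^2+xq^3+xyq^4 & xq^5+x^2yq^7 & xyq^6+x^2y^2q^8\\ 1+xyq^4 & xq^5 & xyq^6\end{pmatrix}
\begin{pmatrix}A_1(xq^6)\\A_2(xq^6)\\A_3(xq^6)\end{pmatrix},
\]
and the first row gives $A_1(x)$ in terms of $A_1(xq^6),A_2(xq^6),A_3(xq^6)$. Using the first equation at argument $x$ and at argument $xq^6$, I can solve for the pair $(A_2(xq^6),A_3(xq^6))$ in terms of $A_1(x),A_1(xq^6),A_1(xq^{12})$ (here I substitute the $x\mapsto xq^6$ shifted version of the $A_2,A_3$-expressions to express $A_2(xq^6),A_3(xq^6)$ via $A_1(xq^6),A_2(xq^{12}),A_3(xq^{12})$, and iterate once more to reach $A_1(xq^{18})$). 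Plugging these back into the first row at argument $x$ produces the desired scalar relation. Equivalently and more cleanly, one sets up the $4\times 3$ matrix whose rows are the $A_1$-extraction covectors $e_1^{\mathsf T}\mM(x)\cdots$ at the four consecutive dilations, appends the column of unknowns $A_1(x),\dots,A_1(xq^{18})$, and imposes that the augmented system be degenerate. The bookkeeping is most safely done by computer algebra — and indeed the paper points to a public \emph{Mathematica} repository — but each individual step is a linear-algebra manipulation over $\mathbb{Q}(x,q,y)$.

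The main obstacle is purely computational rather than conceptual: the entries of $\mM$ are polynomials in $x,q,y$ of moderate degree, so the $3\times 3$ products $\mM(x)\mM(xq^6)$ and $\mM(x)\mM(xq^6)\mM(xq^{12})$ blow up quickly, and the elimination introduces large common factors that must be identified and cancelled to arrive at the comparatively compact coefficients displayed in the statement (degrees up to $x^5$ and $q^{32}$). One must also verify, after cancellation, that the coefficient of $A_1(xq^{18})$ genuinely vanishes — this is the step that makes the result a third-order rather than a fourth-order equation, and it is not a priori guaranteed; it reflects a special structure of $\mM$ (for instance, a factorization or a shared eigen-structure among the dilates). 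Once the equation is assembled, a final sanity check is to expand both sides as power series in $x$ (with coefficients in $\mathbb{Q}(q,y)$) to several orders, using the combinatorial definition $A_1(x)=\sum_{\lambda\in\mS}x^{\sharp(\lambda)}y^{\sharp_{0,2}(\lambda)}q^{|\lambda|}$ from \eqref{eq:S-A-1}, and confirm the identity holds; this both catches algebra errors and pins down any ambiguity left by clearing denominators.
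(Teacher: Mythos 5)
Your proposal is correct and follows essentially the same route as the paper: the authors likewise obtain the relation by running the elimination algorithm of \cite[Section 3]{CL2020} (modeled on \cite[Lemma 8.10]{And1976}) on the matrix equation \eqref{eq:A} to remove $A_2$ and $A_3$, deferring the bookkeeping to a \emph{Mathematica} notebook. Your added caveats --- that the generic elimination would a priori involve $A_1(xq^{18})$ and that one must confirm its coefficient cancels, and that a power-series sanity check is prudent --- are exactly the points the computation in the notebook settles.
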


\begin{proof}
	This result follows from an implementation of the algorithm in \cite[Section 3]{CL2020}. Detailed calculations can be found in the \textit{Mathematica} notebook \texttt{AS-1.nb} uploaded at the url listed at the beginning of Section \ref{sec:q-diff}.
\end{proof}

Let
\begin{equation}
A_1(x)=\sum_{M\ge 0}a(M)x^M.
\end{equation}
Our next task is to deduce a recurrence for the coefficients $a(M)$.

\begin{corollary}\label{coro:a-rec}
	For any $M\ge 0$,
	\begin{equation*}
	\scalebox{0.9}{%
		$\begin{aligned}
		0 &= q^{12M}(y^2q^{31} + y^3q^{32})a(M)\\
		&\quad +q^{12(M+1)}(-q^{22}- yq^{23}+ y^2q^{30}- y^3q^{25}- y^4q^{26})a(M+1)\\
		&\quad+ \big[{-q^{6(M+2)}}(yq^{12} + yq^{14} + y^2q^{13} + y^2q^{15})\\
		&\quad\quad+q^{12(M+2)}(-q^{21}+ yq^{16}+ y^2q^{17}- y^3q^{24})\big]a(M+2)\\
		&\quad +\big[{-q^{6(M+3)}}(q^6 + q^8 + q^{10} + yq^5 + 2 yq^7 + 2 yq^9 + 2 yq^{11} + yq^{13} + y^2q^8 + y^2q^{10} + y^2q^{12})\\
		&\quad\quad +q^{12(M+3)}yq^{15}\big]a(M+3)\\
		&\quad+\big[(q^7+ yq^8)-q^{6(M+4)}(q + q^3 + q^5 + q^7 + yq^2 + yq^4 +  yq^6 + yq^8)\big]a(M+4)\\
		&\quad+\big[1-q^{6(M+5)}\big]a(M+5).
		\end{aligned}
		$}
	\end{equation*}
\end{corollary}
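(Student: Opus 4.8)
The plan is to substitute the power-series expansion $A_1(x)=\sum_{M\ge 0}a(M)x^M$ directly into the $q$-difference equation of Theorem \ref{th:A-q-diff-eqn} and to read off the coefficient of a suitable power of $x$. First note that replacing $x$ by $xq^6$ and by $xq^{12}$ gives $A_1(xq^6)=\sum_{M\ge 0}a(M)q^{6M}x^M$ and $A_1(xq^{12})=\sum_{M\ge 0}a(M)q^{12M}x^M$; these manipulations are legitimate because $A_1(x)$ is, by \eqref{eq:S-A-1}, the generating function $\sum_{\lambda\in\mS}x^{\sharp(\lambda)}y^{\sharp_{0,2}(\lambda)}q^{|\lambda|}$, hence a genuine formal power series in $x$ whose coefficients are power series in $q$ (with polynomial dependence on $y$).

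Next I would write the $q$-difference equation schematically as $0=P_0(x)A_1(x)-P_1(x)A_1(xq^6)+P_2(x)A_1(xq^{12})$, where $P_0$ has $x$-degree $1$, $P_1$ has $x$-degree $3$, and $P_2$ is divisible by $x^2$ with $x$-degree $5$. Since the highest power of $x$ occurring is $x^5$, I extract the coefficient of $x^{M+5}$ from both sides for each fixed $M\ge 0$. Each product $P_i(x)A_1(xq^{6i})$ then contributes only finitely many terms: from $P_0(x)A_1(x)$ one gets $a(M+5)$ together with $(q^7+yq^8)a(M+4)$; from $-P_1(x)A_1(xq^6)$ one gets four terms in $a(M+5),\dots,a(M+2)$ carrying the factors $-q^{6(M+5)},\dots,-q^{6(M+2)}$ times the corresponding bracketed polynomials; and from $P_2(x)A_1(xq^{12})$ one gets four terms in $a(M+3),\dots,a(M)$ carrying the factors $q^{12(M+3)},\dots,q^{12M}$ times the corresponding bracketed polynomials. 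Grouping the total coefficient of each $a(M+j)$ and equating it to zero yields precisely the stated identity.

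There is essentially no obstacle: the argument is a mechanical coefficient comparison, and no boundary corrections are needed, since for every $M\ge 0$ all the indices $M,M+1,\dots,M+5$ are nonnegative, so no vanishing-coefficient conventions enter. The only place where care is warranted is the arithmetic of multiplying the degree-$3$ and degree-$5$ polynomial coefficients against the shifted series and correctly collecting the $q$-powers $q^{6(M+j)}$ and $q^{12(M+j)}$ attached to each $a(M+j)$; this bookkeeping is routine and can be verified against the accompanying \textit{Mathematica} notebook.
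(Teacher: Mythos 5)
Your proposal is correct and follows exactly the paper's own argument: the paper likewise proves Corollary \ref{coro:a-rec} by equating the coefficients of $x^{M+5}$ on both sides of the $q$-difference equation in Theorem \ref{th:A-q-diff-eqn}. Your additional bookkeeping of which $a(M+j)$ each polynomial coefficient of $P_0$, $P_1$, $P_2$ multiplies is accurate and merely makes explicit what the paper leaves routine.
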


\begin{proof}
	For each $M\ge 0$, we equate the coefficients of $x^{M+5}$ on both sides of the $q$-difference equation in Theorem \ref{th:A-q-diff-eqn}, and thus arrive at the desired recurrence.
\end{proof}

\section{Guessing the Andrews--Gordon type series}\label{sec:guess}

Notice that if a series $A(x)\in\mathbb{C}[[y]][[q]][[x]]$ satisfies the $q$-difference equation in Theorem \ref{th:A-q-diff-eqn}, then it is uniquely determined by $A(0)$. Recall also that, by \eqref{eq:S-A-1}, $A_1(0)=1$. Therefore, we can compute the first several terms of $a(M)$ by the $q$-difference equation of $A_1(x)$:
\begin{align*}
a(0)&=1,\\
a(1)&=\frac{q (1+ yq)}{1-q^2},\\
a(2)&=\frac{q^5 (q - q^7 + y + yq^2 - yq^4 - yq^{10} + y^2q^3 - y^2q^9)}{(1-q^2)(1-q^4)(1-q^6)},\\
a(3)&=\frac{q^{12} (1+ yq) (q^3+y+ yq^2- yq^4+ yq^8+ y^2q^5)}{(1-q^2)(1-q^4)(1-q^6)},\\
a(4)&=\frac{(*\cdots*)}{(1-q^2)(1-q^4)(1-q^6)(1-q^8)(1-q^{12})}.
\end{align*}

To guess a suitable candidate of a Andrews--Gordon type series for $A_1(x)$, we require a process of brute force. From $a(1)$, it is natural to expect summations of the form:
$$\sum_{n_1\ge 0}\frac{q^{\boldsymbol{?}}x^{n_1}}{(q^2;q^2)_{n_1}} \qquad\text{and} \qquad \sum_{n_2\ge 0}\frac{q^{\boldsymbol{?}}x^{n_2}y^{n_2}}{(q^2;q^2)_{n_2}}.$$
From $a(2)$, it is also highly possible that an extra summation is needed:
$$\sum_{n_3\ge 0}\frac{(-1)^{\boldsymbol{?}}q^{\boldsymbol{?}}x^{2n_3}y^{n_2}}{(q^6;q^6)_{n_3}}.$$
Therefore, we may first try the candidate of a triple summation:
\begin{align*}
&\sum_{n_1,n_2,n_3\ge 0}\frac{(-1)^{n_3}x^{n_1+n_2+2n_3}y^{n_2+n_3}}{(q^2;q^2)_{n_1}(q^2;q^2)_{n_2}(q^6;q^6)_{n_3}}\notag\\
&\qquad\times q^{\alpha_{11}\binom{n_1}{2}+\alpha_{22}\binom{n_2}{2}+\alpha_{33}\binom{n_3}{2}+\alpha_{12}n_1n_2+\alpha_{23}n_2n_3+\alpha_{31}n_3n_1+ \beta_1 n_1+\beta_2 n_2+\beta_3 n_3}.
\end{align*}
From $a(1)$, we may compute that $\beta_1=1$ and $\beta_2=2$. From $a(2)$, we may further compute that $\alpha_{11}=4$ and $\alpha_{22}=4$, and single out candidates of $\alpha_{12}=2$ and $\beta_3=9$. Continuing this process, we finally arrive at the right-hand side of \eqref{eq:S-2-1}.

\section{Another recurrence}\label{sec:rec}

Let
\begin{align}
\sum_{M\ge 0}\ta(M)x^M&=\sum_{n_1,n_2,n_3\ge 0}\frac{(-1)^{n_3}x^{n_1+n_2+2n_3}y^{n_2+n_3}}{(q^2;q^2)_{n_1}(q^2;q^2)_{n_2}(q^6;q^6)_{n_3}}\notag\\
&\times q^{4\binom{n_1}{2}+4\binom{n_2}{2}+18\binom{n_3}{2}+2n_1n_2+6n_2n_3+6n_3n_1+ n_1+2 n_2+9n_3}.
\end{align}
Our task in this section is to find a recurrence for $\ta(M)$. To do so, we require the \textit{Mathematica} package \texttt{qMultiSum} implemented by Riese \cite{Rie2003} of Research Institute for Symbolic Computation (RISC) of Johannes Kepler University. This package can be downloaded at the website of RISC:
{\small
\begin{center}
	\url{https://www3.risc.jku.at/research/combinat/software/ergosum/index.html}
\end{center}
}

We first import this package:
\begin{lstlisting}[language=Mathematica]
<<RISC`qMultiSum`
\end{lstlisting}
Then the following recurrence can be computed automatically by calling the commands:
\begin{lstlisting}[language=Mathematica]
ClearAll[M, n1, n2, n3, y];
n1 = M - n2 - 2 n3;
summand = ((-1)^n3 q^(
4 Binomial[n1, 2] + 4 Binomial[n2, 2] + 18 Binomial[n3, 2] + 
2 n1 n2 + 6 n2 n3 + 6 n3 n1 + n1 + 2 n2 + 9 n3) y^(
n2 + n3))/(qPochhammer[q^2, q^2, n1] qPochhammer[q^2, q^2, 
n2] qPochhammer[q^6, q^6, n3]);
stru = qFindStructureSet[summand, {M}, {n2, n3}, {2}, {2, 2}, {2, 2}, 
qProtocol -> True]
rec = qFindRecurrence[summand, {M}, {n2, n3}, {2}, {2, 2}, {2, 2}, 
qProtocol -> True, StructSet -> stru[[1]]]
sumrec = qSumRecurrence[rec]
\end{lstlisting}

\begin{theorem}\label{th:ta-rec}
	For any $M\ge 0$,
	\begin{align*}
	0&=q^{12 M + 24} y^2 (1 + 2 yq + y^2q^2 + q^{6 M + 22} + yq^{6 M + 23} + y^2q^{6 M+ 24}) \ta(M)\\
	&\quad - q^{12 M + 27} (1 + yq) (1 + yq + y^2q^2 - y^2q^8 + y^3q^3 + y^4q^4\\
	&\quad\quad + q^{6 M + 22} +
	 y^2q^{6 M + 24} + y^4q^{6 M + 26}) \ta(M+1)\\
	&\quad-q^{6 M + 17} (y + yq^2 + 2 y^2q + 
	2 y^2q^3 + y^3q^2 + y^3q^4 - q^{6 M + 15} + q^{6 M + 21}\\
	&\quad\quad - 2 yq^{6 M + 16} + 
	2 yq^{6 M + 22} + yq^{6 M + 24} - yq^{12 M + 38}\\
	&\quad\quad - 3 y^2q^{6 M + 17} + 2 y^2q^{6 M + 23} + y^2q^{6 M + 25} - y^2q^{12 M + 39}\\
	&\quad\quad - 
	2 y^3q^{6 M + 18} + 2 y^3q^{6 M + 24} + y^3q^{6 M + 26} - y^3q^{12 M + 40}\\
	&\quad\quad - y^4q^{6 M + 19} + y^4q^{6 M + 25}) \ta(M+2)\\
	&\quad -q^{6 M + 17} (1 + q^2 + q^4) (1 + yq) (1 + yq + yq^3 + y^2q^2\\
	&\quad\quad + q^{6 M + 20} + yq^{6 M + 21} +  y^2q^{6 M + 22}) \ta(M+3)\\
	&\quad+(1 - q^{6 M + 24}) (1 + 2 yq + y^2q^2 + q^{6 M + 16} + yq^{6 M + 17} + y^2q^{6 M + 18}) \ta(M+4).
	\end{align*}
\end{theorem}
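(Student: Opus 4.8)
The plan is to establish the recurrence by $q$-creative telescoping for multiple sums --- the $q$-analogue of the Wilf--Zeilberger method --- which is exactly what Riese's package \texttt{qMultiSum} automates. First I would eliminate one summation index through the substitution $n_1=M-n_2-2n_3$, writing
\[
\ta(M)=\sum_{n_2,n_3\ge 0}F(M,n_2,n_3),\qquad
F(M,n_2,n_3)=\frac{(-1)^{n_3}\,y^{n_2+n_3}\,q^{Q}}{(q^2;q^2)_{n_1}(q^2;q^2)_{n_2}(q^6;q^6)_{n_3}},
\]
with $n_1=M-n_2-2n_3$ and $Q=4\binom{n_1}{2}+4\binom{n_2}{2}+18\binom{n_3}{2}+2n_1n_2+6n_2n_3+6n_3n_1+n_1+2n_2+9n_3$. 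For each fixed $M$, the denominator factor $(q^2;q^2)_{n_1}$ is infinite whenever $n_1<0$, so $F$ is supported on the finite triangle $n_2,n_3\ge 0$, $n_2+2n_3\le M$; moreover $F$ is a $q$-proper hypergeometric term in $M,n_2,n_3$, so the consecutive ratios $F(M+1,n_2,n_3)/F$, $F(M,n_2+1,n_3)/F$ and $F(M,n_2,n_3+1)/F$ are explicit rational functions of $q^M,q^{n_2},q^{n_3},q,y$.

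Next I would run the telescoping search: locate a recurrence operator $\sum_{j=0}^{4}c_j(M)\,S_M^{\,j}$, where $S_M$ is the forward shift $M\mapsto M+1$ and each $c_j$ is a polynomial in $q^M,q,y$, together with certificates $R_2,R_3$ rational in $q^M,q^{n_2},q^{n_3},q,y$, such that
\[
\Big(\sum_{j=0}^{4}c_j(M)\,S_M^{\,j}\Big)F(M,n_2,n_3)=\big(S_{n_2}-1\big)\!\big[R_2\,F\big]+\big(S_{n_3}-1\big)\!\big[R_3\,F\big].
\]
This is what the displayed commands do: \texttt{qFindStructureSet} fixes the monomial supports of the $c_j$ and the $R_i$, \texttt{qFindRecurrence} solves the resulting linear system over $\mathbb{Q}(q,y)$ for $c_j,R_2,R_3$, and \texttt{qSumRecurrence} sums the identity over $n_2,n_3\ge 0$. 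That summation telescopes the right-hand side to boundary terms which vanish --- $F$ has finite support, and the certificates are picked so that $R_2F$ and $R_3F$ die at $n_2=0$ and $n_3=0$ --- leaving $\sum_{j=0}^{4}c_j(M)\,\ta(M+j)=0$, which is precisely the five-term recurrence of Theorem~\ref{th:ta-rec}.

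For a fully rigorous argument the remaining step is verification, and it is routine: once $c_j,R_2,R_3$ are in hand, dividing the telescoping identity by $F(M,n_2,n_3)$ and clearing denominators turns it into a polynomial identity in $q^M,q^{n_2},q^{n_3},q,y$, checkable by direct expansion, and one separately confirms that $R_2F$ and $R_3F$ vanish at the edges of the summation region so that no boundary contribution survives. Together these yield the recurrence for every $M\ge 0$.

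The genuine obstacle here is computational, not conceptual: the certificates $R_2,R_3$ are bulky rational functions, the correct structure set may only emerge after enlarging the ansatz, and the underlying linear algebra over $\mathbb{Q}(q,y)$ is sizable --- which is precisely why the step is delegated to \texttt{qMultiSum}. A by-hand alternative would iterate the Pochhammer recurrences $(q^{2};q^{2})_{n}=(1-q^{2n})(q^{2};q^{2})_{n-1}$ and $(q^{6};q^{6})_{n}=(1-q^{6n})(q^{6};q^{6})_{n-1}$ together with the first-order behaviour of $q^{Q}$ under $n_i\mapsto n_i\pm1$ to generate contiguous relations among the auxiliary single and double sums and then eliminate them; but that elimination is exactly the bookkeeping the algorithm performs, so I would not expect it to be shorter.
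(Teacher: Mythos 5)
Your proposal matches the paper's proof: the authors establish this recurrence precisely by feeding the summand (with $n_1=M-n_2-2n_3$ eliminated) to Riese's \texttt{qMultiSum} via \texttt{qFindStructureSet}, \texttt{qFindRecurrence}, and \texttt{qSumRecurrence}, i.e.\ by the $q$-creative-telescoping argument you describe. The only cosmetic quibble is that for $n_1<0$ one should say $1/(q^2;q^2)_{n_1}=0$ by convention rather than that the Pochhammer symbol is ``infinite,'' but the finite-support conclusion and the rest of the argument are exactly what the paper does.
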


\section{Proof of (\ref{eq:S-2-1})}\label{sec:S-2-1}

By \eqref{eq:S-A-1}, we find that to prove \eqref{eq:S-2-1}, it suffices to show that for $M\ge 0$,
\begin{align}\label{eq:a-ta}
a(M)=\ta(M).
\end{align}
Recall from Corollary \ref{coro:a-rec} and Theorem \ref{th:ta-rec} that both $a(M)$ and $\ta(M)$ satisfy a recurrence. By a result of Kauers and Koutschan \cite{KK2009}, we know that so does $a(M)-\ta(M)$. To compute the recurrence for $a(M)-\ta(M)$, we require the \textit{Mathematica} package \texttt{qGeneratingFunctions} implemented by Koutschan. This package can also be downloaded at the website of RISC, the url of which is listed at the beginning of Section \ref{sec:rec}.

To begin with, we import this package:
\begin{lstlisting}[language=Mathematica]
<<RISC`qGeneratingFunctions`
\end{lstlisting}
Now assuming that \texttt{sumrec1 == 0} is the recurrence for $a(M)$ as in Corollary \ref{coro:a-rec} and that \texttt{sumrec2 == 0} is the recurrence for $\ta(M)$ as in Theorem \ref{th:ta-rec}, then the recurrence for $a(M)-\ta(M)$ can be computed by calling the commands:
\begin{lstlisting}[language=Mathematica]
ClearAll[M,y];
QREPlus[{sumrec1 == 0}, {sumrec2 == 0}, SUM[M]]
\end{lstlisting}
The result gives us a fifth-order recurrence.

\begin{theorem}
	Let $d(M):=a(M)-\ta(M)$. Then for $M\ge 5$,
	\begin{align*}
	q^{29}(1 - q^{6 M}) d(M) &= -q^{12 M} y^2 (1 + yq) d(M-5)\\
	&\quad + q^{12 M + 3} (1 + yq - y^2q^8 + y^3q^3 + y^4q^4) d(M-4)\\
	&\quad + q^{6 M + 9} (1 + yq) (yq^{14} + yq^{16}\\
	&\quad\quad +q^{6 M+5} - yq^{6 M} - yq^{6 M + 6} + y^2q^{6 M + 7}) d(M-3)\\
	&\quad + q^{6 M + 20} (q^3 + q^5 + q^7 + yq^2 + 2 yq^4 + 2 yq^6\\
	&\quad\quad + 2 yq^8 + yq^{10} + y^2q^5 + y^2q^7 + y^2q^9 - yq^{6 M})d(M-2)\\
	&\quad - q^{24} (1 + yq) (q^{12} - q^{6 M}\\
	&\quad\quad - q^{6 M + 2} - q^{6 M + 4} - q^{6 M + 6}) d(M-1).
	\end{align*}
\end{theorem}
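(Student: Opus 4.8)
The plan is to derive the recurrence for $d(M)$ from the closure of $q$-holonomic sequences under addition. By Corollary~\ref{coro:a-rec} the sequence $a(M)$ is annihilated by an order-$5$ linear recurrence operator $P_1$ with coefficients in $\mathbb{Q}(q,y)[q^M]$, and by Theorem~\ref{th:ta-rec} the sequence $\ta(M)$ is annihilated by an order-$4$ operator $P_2$ of the same shape. Working in the Ore algebra of $q$-shift operators $\mathbb{Q}(q,y,Q)\langle S\rangle$, where $S$ acts by $M\mapsto M+1$, $Q=q^M$, and $SQ=qQS$, every element of the intersection of the left ideals generated by $P_1$ and by $P_2$ annihilates $d(M)=a(M)-\ta(M)$; the least common left multiple $L$ of $P_1$ and $P_2$ is the canonical such element, and it is computable by a finite amount of linear algebra over $\mathbb{Q}(q,y,Q)$. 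This is exactly what the routine \texttt{QREPlus} of the \texttt{qGeneratingFunctions} package performs, so the proof amounts to running that computation — with input the recurrences of Corollary~\ref{coro:a-rec} and Theorem~\ref{th:ta-rec} — clearing denominators, and reading off the coefficients; the full computation is recorded in the accompanying \textit{Mathematica} notebook.

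Concretely I would: (i) translate the two recurrences into the operators $P_1,P_2$; (ii) compute $L=\operatorname{lclm}(P_1,P_2)$, for instance by simultaneously reducing $S^0,S^1,S^2,\dots$ modulo the two left ideals and extracting the first $\mathbb{Q}(q,y,Q)$-linear dependence among the resulting pairs of normal forms; (iii) normalize $L$ to have polynomial coefficients in $q^M$ and match them against the displayed relation; and (iv) cross-check by substituting the explicit values of $a(M)$ and $\ta(M)$ for a few small $M$. A useful by-product is that $L$ turns out to have order $5$ rather than the generic $5+4=9$; this means $P_2$ is, up to a left factor of order one, a right divisor of $P_1$, and in fact the displayed recurrence is nothing but $q^{29}$ times the recurrence of Corollary~\ref{coro:a-rec}. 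That observation yields a second, hand-checkable route to the theorem: the displayed relation annihilates $a(M)$ term by term because it is a scalar multiple of Corollary~\ref{coro:a-rec}, and exhibiting the first-order operator $U$ with $q^{29}P_1=UP_2$ shows it also annihilates $\ta(M)$, hence their difference $d(M)$.

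The mathematical content is light, and the main obstacle is purely the size of the polynomial manipulations: the intermediate operators reach order $9$ and carry coefficients that are polynomials in $q$ and $y$ of appreciable degree, which is why the step is handed to a computer algebra system and appeals to the closure result of Kauers and Koutschan \cite{KK2009}. Two small points must still be recorded. First, one checks that the leading coefficient $q^{29}(1-q^{6M})$ is not identically zero, so $L$ genuinely has order $5$ and the relation can later be used to propagate values of $d$. Second, the recurrence is stated only for $M\ge5$: the operator identity $L\,d=0$ holds for all $M\ge0$, but rewriting it to express $d(M)$ through $d(M-1),\dots,d(M-5)$ forces the shifted arguments to be non-negative.
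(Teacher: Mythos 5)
Your proposal is correct and follows essentially the same route as the paper: both appeal to the Kauers--Koutschan closure property and obtain the fifth-order recurrence for $d(M)$ by feeding the recurrences of Corollary~\ref{coro:a-rec} and Theorem~\ref{th:ta-rec} to \texttt{QREPlus}, with the small cases checked directly. Your extra observation --- that the resulting operator is exactly $q^{29}$ times the (shifted) operator of Corollary~\ref{coro:a-rec}, so that the order drops from the generic $9$ to $5$ and the operator of Theorem~\ref{th:ta-rec} is a right factor --- is correct and is a nice point the paper does not record explicitly.
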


Therefore, to prove \eqref{eq:a-ta} for any $M\ge 0$, it suffices to verify that it holds true for $0\le M\le 4$. This can be done by a straightforward calculation.

\section{Proof of Corollary \ref{coro:A}}\label{sec:coro}

Taking $y=x$ in \eqref{eq:S-2-1}, we see that Theorem \ref{th:A} is equivalent to \eqref{eq:A-analytic}:
\begin{align*}
\prod_{n\ge 0}(1+x q^{2n+1}+x^2 q^{4n+2})&=\sum_{n_1,n_2,n_3\ge 0}\frac{(-1)^{n_3}x^{n_1+2n_2+3n_3}}{(q^2;q^2)_{n_1}(q^2;q^2)_{n_2}(q^6;q^6)_{n_3}}\\
&\times q^{4\binom{n_1}{2}+4\binom{n_2}{2}+18\binom{n_3}{2}+2n_1n_2+6n_2n_3+6n_3n_1+ n_1+2 n_2+9n_3}.
\end{align*}
In this section, we give two proofs of this identity --- one is $q$-hypergeometric and the other is computer-assisted.

For convenience, we write
\begin{align}\label{eq:S-def}
S(x)=\sum_{M\ge 0}s(M)x^M&:=\sum_{n_1,n_2,n_3\ge 0}\frac{(-1)^{n_3}x^{n_1+2n_2+3n_3}}{(q^2;q^2)_{n_1}(q^2;q^2)_{n_2}(q^6;q^6)_{n_3}}\notag\\
&\;\times q^{4\binom{n_1}{2}+4\binom{n_2}{2}+18\binom{n_3}{2}+2n_1n_2+6n_2n_3+6n_3n_1+ n_1+2 n_2+9n_3}.
\end{align}

\subsection{A $q$-hypergeometric proof}
For convenience, we write for integers $\beta_1$, $\beta_2$ and $\beta_3$:
\begin{align*}
	\sigma(\beta_1,\beta_2,\beta_3)&:=\frac{(-1)^{n_3}x^{n_1+2n_2+3n_3}}{(q^2;q^2)_{n_1}(q^2;q^2)_{n_2}(q^6;q^6)_{n_3}}\\
	&\;\quad\times q^{4\binom{n_1}{2}+4\binom{n_2}{2}+18\binom{n_3}{2}+2n_1n_2+6n_2n_3+6n_3n_1+ \beta_1 n_1+\beta_2 n_2+\beta_3 n_3}
\end{align*}
and
\begin{align*}
\Sigma(\beta_1,\beta_2,\beta_3):=\sum_{n_1,n_2,n_3\ge 0}\sigma(\beta_1,\beta_2,\beta_3).
\end{align*}

Our proof relies on the following lemma.

\begin{lemma}\label{le:sigma}
	Let $k_1$, $k_2$ and $k_3$ be nonnegative integers. Then for any integers $\beta_1$, $\beta_2$ and $\beta_3$,
	\begin{align}\label{eq:sigma-1}
		&\sum_{n_1,n_2,n_3\ge 0}\sigma(\beta_1,\beta_2,\beta_3)(q^{2n_1};q^{-2})_{k_1}(q^{2n_2};q^{-2})_{k_2}(q^{6n_3};q^{-6})_{k_3}\notag\\
		&=(-1)^{k_3}x^{k_1+2k_2+3k_3}q^{4\binom{k_1}{2}+4\binom{k_2}{2}+18\binom{k_3}{2}+2k_1k_2+6k_2k_3+6k_3k_1+k_1\beta_1+k_2\beta_2+k_3\beta_3}\notag\\
		&\quad\times\Sigma(\beta_1+4k_1+2k_2+6k_3,\beta_2+2k_1+4k_2+6k_3,\beta_3+6k_1+6k_2+18k_3).
	\end{align}
\end{lemma}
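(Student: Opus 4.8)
The plan is to prove Lemma \ref{le:sigma} by a direct reindexing of the triple sum, exploiting the fact that the finite $q$-Pochhammer factors $(q^{2n_i};q^{-2})_{k_i}$ and $(q^{6n_3};q^{-6})_{k_3}$ annihilate exactly the ``small'' terms and telescope against the denominators on the surviving ones. No generating-function or $q$-difference machinery is needed; everything takes place as a formal identity in $\mathbb{C}[[y]][[q]][[x]]$, which is legitimate because for each fixed power of $x$ the constraint $n_1+2n_2+3n_3=\text{const}$ leaves only finitely many triples $(n_1,n_2,n_3)$.

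First I would record the elementary facts about the finite Pochhammer symbols. From $(A;q)_m=\prod_{j=0}^{m-1}(1-Aq^j)$ one gets
\[
(q^{2n};q^{-2})_k=\prod_{j=0}^{k-1}\bigl(1-q^{2(n-j)}\bigr)=\prod_{i=n-k+1}^{n}\bigl(1-q^{2i}\bigr),
\]
which vanishes precisely when $n\in\{0,1,\ldots,k-1\}$, and for $n\ge k$ satisfies
\[
\frac{(q^{2n};q^{-2})_k}{(q^2;q^2)_n}=\frac{\prod_{i=n-k+1}^{n}(1-q^{2i})}{\prod_{i=1}^{n}(1-q^{2i})}=\frac{1}{(q^2;q^2)_{n-k}};
\]
the identical statement holds with $q^2$ replaced by $q^6$. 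Hence, in the left-hand side of \eqref{eq:sigma-1} only the terms with $n_1\ge k_1$, $n_2\ge k_2$, $n_3\ge k_3$ contribute, and writing $n_i=m_i+k_i$ with $m_i\ge 0$ turns each ratio $(q^{2n_i};q^{-2})_{k_i}/(q^2;q^2)_{n_i}$ (resp.\ the base-$q^6$ ratio) into $1/(q^2;q^2)_{m_i}$ (resp.\ $1/(q^6;q^6)_{m_3}$). Thus the left side of \eqref{eq:sigma-1} becomes $\sum_{m_1,m_2,m_3\ge 0}$ of an expression having the same Pochhammer denominators as $\sigma$, with everything else rewritten in the $m_i$.

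Next I would expand the remaining data. The sign splits as $(-1)^{n_3}=(-1)^{k_3}(-1)^{m_3}$ and the monomial as $x^{n_1+2n_2+3n_3}=x^{k_1+2k_2+3k_3}\,x^{m_1+2m_2+3m_3}$, peeling off the claimed constant factors. For the exponent of $q$, I would substitute $n_i=m_i+k_i$ into $4\binom{n_1}{2}+4\binom{n_2}{2}+18\binom{n_3}{2}+2n_1n_2+6n_2n_3+6n_3n_1+\beta_1n_1+\beta_2n_2+\beta_3n_3$, using $\binom{a+b}{2}=\binom{a}{2}+ab+\binom{b}{2}$ and the bilinearity of the cross terms. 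The exponent then separates into (i) the \emph{same} quadratic form $4\binom{m_1}{2}+4\binom{m_2}{2}+18\binom{m_3}{2}+2m_1m_2+6m_2m_3+6m_3m_1$; (ii) the pure-$k$ constant $4\binom{k_1}{2}+4\binom{k_2}{2}+18\binom{k_3}{2}+2k_1k_2+6k_2k_3+6k_3k_1+k_1\beta_1+k_2\beta_2+k_3\beta_3$, which is exactly the exponent of $q$ in the prefactor of \eqref{eq:sigma-1}; and (iii) a linear-in-$m$ part whose coefficients of $m_1,m_2,m_3$ are $\beta_1+4k_1+2k_2+6k_3$, $\beta_2+2k_1+4k_2+6k_3$, and $\beta_3+6k_1+6k_2+18k_3$. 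Collecting, the surviving sum is precisely the prefactor of \eqref{eq:sigma-1} times $\Sigma(\beta_1+4k_1+2k_2+6k_3,\;\beta_2+2k_1+4k_2+6k_3,\;\beta_3+6k_1+6k_2+18k_3)$.

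The one genuinely delicate point is the bookkeeping in this last step: one must track exactly which cross term ($2n_1n_2$, $6n_2n_3$, or $6n_3n_1$) contributes to which of the three linear coefficients, and verify that the asymmetric inputs — the $18\binom{n_3}{2}$ giving $18k_3$, the split of $2n_1n_2$ into $2k_2$ for $m_1$ and $2k_1$ for $m_2$, and the two $6$'s from the mixed terms — land in the correct slots. This asymmetry is precisely what produces the shift pattern $(4k_1+2k_2+6k_3,\,2k_1+4k_2+6k_3,\,6k_1+6k_2+18k_3)$ in the arguments of $\Sigma$, so the matrix of these coefficients should be displayed and checked against the quadratic form's coefficient matrix $\bigl(\begin{smallmatrix}4&2&6\\2&4&6\\6&6&18\end{smallmatrix}\bigr)$ to confirm the identity with no error.
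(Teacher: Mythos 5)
Your proposal is correct and is essentially the paper's own argument: the factors $(q^{2n_i};q^{-2})_{k_i}$ (resp.\ $(q^{6n_3};q^{-6})_{k_3}$) annihilate the terms with $n_i<k_i$ and telescope against the denominators to give $1/(q^2;q^2)_{n_i-k_i}$ (resp.\ $1/(q^6;q^6)_{n_3-k_3}$), after which the shift $n_i=m_i+k_i$ reproduces the same quadratic form with the stated linear shifts and prefactor. The coefficient bookkeeping you flag as the delicate point does check out against the matrix $\bigl(\begin{smallmatrix}4&2&6\\2&4&6\\6&6&18\end{smallmatrix}\bigr)$, so nothing further is needed.
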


\begin{proof}
	We have
	\begin{align*}
		&\sum_{n_1,n_2,n_3\ge 0}\sigma(\beta_1,\beta_2,\beta_3)(q^{2n_1};q^{-2})_{k_1}(q^{2n_2};q^{-2})_{k_2}(q^{6n_3};q^{-6})_{k_3}\\
		&=\sum_{n_1,n_2,n_3\ge 0}\frac{(-1)^{n_3}x^{n_1+2n_2+3n_3}(q^{2n_1};q^{-2})_{k_1}(q^{2n_2};q^{-2})_{k_2}(q^{6n_3};q^{-6})_{k_3}}{(q^2;q^2)_{n_1}(q^2;q^2)_{n_2}(q^6;q^6)_{n_3}}\\
		&\quad\times q^{4\binom{n_1}{2}+4\binom{n_2}{2}+18\binom{n_3}{2}+2n_1n_2+6n_2n_3+6n_3n_1+ \beta_1 n_1+\beta_2 n_2+\beta_3 n_3}\\
		&=\sum_{\substack{n_1\ge k_1\\n_2\ge k_2\\n_3\ge k_3}}\frac{(-1)^{n_3}x^{n_1+2n_2+3n_3}}{(q^2;q^2)_{n_1-k_1}(q^2;q^2)_{n_2-k_2}(q^6;q^6)_{n_3-k_3}}\\
		&\quad\times q^{4\binom{n_1}{2}+4\binom{n_2}{2}+18\binom{n_3}{2}+2n_1n_2+6n_2n_3+6n_3n_1+ \beta_1 n_1+\beta_2 n_2+\beta_3 n_3}\\
		&=\sum_{n_1,n_2,n_3\ge 0}\frac{(-1)^{n_3+k_3}x^{(n_1+k_1)+2(n_2+k_2)+3(n_3+k_3)}}{(q^2;q^2)_{n_1}(q^2;q^2)_{n_2}(q^6;q^6)_{n_3}}\\
		&\quad\times q^{4\binom{n_1+k_1}{2}+4\binom{n_2+k_2}{2}+18\binom{n_3+k_3}{2}+2(n_1+k_1)(n_2+k_2)+6(n_2+k_2)(n_3+k_3)+6(n_3+k_3)(n_1+k_1)}\\
		&\quad\times q^{\beta_1 (n_1+k_1)+\beta_2 (n_2+k_2)+\beta_3 (n_3+k_3)}.
	\end{align*}
	This gives \eqref{eq:sigma-1} after simplification.
\end{proof}

Now, we show the following result.
\begin{theorem}\label{eq:Sigma-1-2-9}
	We have
	\begin{align}
		\Sigma(1,2,9)=(1+xq+x^2q^2)\Sigma(3,6,15).
	\end{align}
\end{theorem}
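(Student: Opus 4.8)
The plan is to prove the claimed identity $\Sigma(1,2,9)=(1+xq+x^2q^2)\Sigma(3,6,15)$ by splitting the triple sum defining $\Sigma(1,2,9)$ according to the value of $n_1$ modulo something — more precisely, by peeling off the $n_1=0$ term and shifting $n_1\mapsto n_1+1$ in the remainder, iterating until the $n_1$-exponent structure matches $\Sigma(3,6,15)$. Concretely, I would use the $n_1$-summation recurrence: writing $\Sigma(\beta_1,\beta_2,\beta_3)$ and isolating the $n_1=0$ contribution (which is $\Sigma'$ with the $n_1$-variable removed, i.e. a double sum), the tail $\sum_{n_1\ge 1}$ becomes, after $n_1\mapsto n_1+1$, a constant times $\Sigma(\beta_1+4,\beta_2+2,\beta_3+6)$ because the quadratic form picks up $4\binom{n_1+1}{2}-4\binom{n_1}{2}=4n_1$, the cross term $2n_1n_2$ picks up $2n_2$, the cross term $6n_3n_1$ picks up $6n_3$, and the linear term contributes $\beta_1$. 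So there is a clean first-order relation
\begin{align*}
\Sigma(\beta_1,\beta_2,\beta_3)=\Sigma_{\widehat{n_1}}(\beta_2,\beta_3)+xq^{\beta_1}\,\Sigma(\beta_1+4,\beta_2+2,\beta_3+6),
\end{align*}
where $\Sigma_{\widehat{n_1}}$ denotes the double sum over $n_2,n_3$ only. The idea is to apply this twice to $\Sigma(1,2,9)$, compare with applying it twice (or suitably) to $(1+xq+x^2q^2)\Sigma(3,6,15)$, and show the leftover double-sum pieces cancel.

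In more detail, I would first establish the analogous peeling identities in the $n_2$ and $n_3$ variables as well (or at least be ready to use Lemma~\ref{le:sigma} with $(k_1,k_2,k_3)$ a single standard basis vector, which is exactly the $k_i=1$ specialization and gives a ``derivative-like'' relation connecting $\Sigma(\beta_1,\beta_2,\beta_3)$, its shift, and a sum weighted by $(1-q^{2n_i})$ or $(1-q^{6n_3})$). Then the strategy is: expand $\Sigma(1,2,9)$ using the $n_1$-peeling relation twice to get
\begin{align*}
\Sigma(1,2,9)=\Sigma_{\widehat{n_1}}(2,9)+xq\,\Sigma_{\widehat{n_1}}(4,15)+x^2q^{2}\,q^{4}\cdot q^{-4}\,\Sigma(9,6,21)\cdot(\text{correct power}),
\end{align*}
and separately expand $(1+xq+x^2q^2)\Sigma(3,6,15)$. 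The degree-$3$-in-$x$ (and higher) tails both reduce to $\Sigma$'s with $\beta_1\ge 9$; one checks these agree directly, while the low-order surplus terms — the double sums $\Sigma_{\widehat{n_1}}$ — must be shown to match. The double sums $\Sigma_{\widehat{n_1}}(\beta_2,\beta_3)$ are themselves Andrews--Gordon type series in $n_2,n_3$ and satisfy their own peeling relations in $n_2$ (shifting $n_2\mapsto n_2+1$ costs $q^{\beta_2}$ and $x^2$ and bumps $\beta_3$ by $6$), so I would reduce the identity among these to a finite check.

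The cleanest path, and the one I would actually pursue, is to recognize that this is precisely the kind of identity that the $q$-difference/recurrence machinery of the paper handles automatically: both $\Sigma(1,2,9)$ (which equals $S(x)=\sum s(M)x^M$) and $\Sigma(3,6,15)$ can be matched coefficientwise in $x$ by showing both sides satisfy a common recurrence in $M$ together with enough initial values. But since Theorem~\ref{eq:Sigma-1-2-9} is stated as a lemma feeding the $q$-hypergeometric proof, the intended argument is the direct telescoping one above: apply Lemma~\ref{le:sigma} with $k_1=1$, $k_1=2$, etc., to generate the relations $\Sigma(\beta_1,\beta_2,\beta_3)=(\text{boundary})+x^{k_1}q^{\cdots}\Sigma(\beta_1+4k_1,\dots)$, and combine three or four such instances so that the boundary terms telescope to zero and the surviving $\Sigma$-terms assemble into $(1+xq+x^2q^2)\Sigma(3,6,15)$.

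The main obstacle I anticipate is bookkeeping the exponents of $q$ and $x$ in the boundary (lower-dimensional) sums and verifying that they cancel exactly: the quadratic form $4\binom{n_1}{2}+2n_1n_2+6n_3n_1$ interacts with all three linear parameters, so each shift $n_1\mapsto n_1+1$ rewrites $\beta_2\mapsto\beta_2+2$ and $\beta_3\mapsto\beta_3+6$ as well, and one must track this coupling carefully through two or three iterations. Getting the surplus double-sum terms to telescope — rather than merely ``morally'' cancel — is the crux; everything else is routine $q$-series manipulation along the lines of the proof of Lemma~\ref{le:sigma}.
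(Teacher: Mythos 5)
There is a genuine gap, and it sits at the center of your plan. The ``clean first-order relation''
\begin{align*}
\Sigma(\beta_1,\beta_2,\beta_3)=\Sigma_{\widehat{n_1}}(\beta_2,\beta_3)+xq^{\beta_1}\,\Sigma(\beta_1+4,\beta_2+2,\beta_3+6)
\end{align*}
is false. If you peel off the $n_1=0$ term and shift $n_1\mapsto n_1+1$ in the tail $\sum_{n_1\ge 1}$, the denominator becomes $(q^2;q^2)_{n_1+1}$, not $(q^2;q^2)_{n_1}$, so the tail is \emph{not} a constant multiple of $\Sigma(\beta_1+4,\beta_2+2,\beta_3+6)$. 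The correct device --- the one Lemma \ref{le:sigma} actually encodes --- is the multiplicative splitting $1=q^{2n_1}+(1-q^{2n_1})$ inside the summand: the factor $(1-q^{2n_1})$ cancels the top factor of $(q^2;q^2)_{n_1}$, after which the index shift is clean and yields $xq^{\beta_1}\Sigma(\beta_1+4,\beta_2+2,\beta_3+6)$, while the complementary piece $q^{2n_1}$ simply bumps $\beta_1$ to $\beta_1+2$ and remains a full triple sum. In a correct argument no boundary double sums $\Sigma_{\widehat{n_1}}$ ever appear, so your plan of ``showing the leftover double-sum pieces cancel'' is organized around objects that should not be present.

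Even granting the corrected relation, the proposal does not reach the theorem. Iterating a peeling in $n_1$ alone only moves $\beta_1$ (and, through the cross terms, $\beta_2,\beta_3$ in lockstep with powers of $x$); since $\Sigma(3,6,15)$ differs from $\Sigma(1,2,9)$ in all three linear parameters in an incompatible way, you must mix in pieces weighted by $(1-q^{2n_2})$, $(1-q^{6n_3})$, and products such as $(1-q^{2n_1})(1-q^{2n_2})$ and $(1-q^{2n_1})(1-q^{2n_2})(1-q^{2n_2-2})$. The paper's proof does exactly this: it exhibits an explicit identity $1=I_1+\cdots+I_9$ in which $I_1,I_2,I_3$ produce $\Sigma(3,6,15)$, $xq\,\Sigma(3,6,15)$, $x^2q^2\,\Sigma(3,6,15)$ respectively via Lemma \ref{le:sigma}, while $I_4+I_5$, $I_6+I_7$, $I_8+I_9$ each cancel in pairs after summation. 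Finding that decomposition is the entire content of the theorem, and your write-up explicitly defers it (``the crux''). Your fallback --- matching coefficients of $x^M$ via a common recurrence --- is viable in principle (it is the flavor of the paper's second, computer-assisted proof of \eqref{eq:A-analytic}), but it too is only gestured at, not carried out.
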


\begin{proof}
	We split $1$ as
	\begin{align*}
		1=I_1+I_2+I_3+I_4+I_5+I_6+I_7+I_8+I_9,
	\end{align*}
	where
	\begin{align*}
	I_1&=q^{2n_1+4n_2+6n_3},\\
	I_2&=q^{-2n_1+2n_2+2}(1-q^{2n_1}),\\
	I_3&=1-q^{2n_2},\\
	I_4&=q^{4n_1+2n_2+6n_3}(1-q^{2n_2}),\\
	I_5&=-q^{-2n_1+2n_2+2}(1-q^{2n_1})(1-q^{2n_1-2}),\\
	I_6&=q^{2n_1+4n_2+6n_3-2}(1-q^{2n_1})(1-q^{2n_2}),\\
	I_7&=q^{2n_1+4n_2}(1-q^{6n_3}),\\
	I_8&=q^{2n_1+2n_2+6n_3}(1-q^{2n_1})(1-q^{2n_2})(1-q^{2n_2-2}),\\
	I_9&=q^{2n_1+2n_2}(1-q^{2n_2})(1-q^{6n_3}).
	\end{align*}
	Then by Lemma \ref{le:sigma},
	\begin{align*}
	\sum_{n_1,n_2,n_3\ge 0}\sigma(1,2,9)I_1&=\sum_{n_1,n_2,n_3\ge 0}\sigma(3,6,15)\\
	&=\Sigma(3,6,15),\\
	\sum_{n_1,n_2,n_3\ge 0}\sigma(1,2,9)I_2&=q^2\sum_{n_1,n_2,n_3\ge 0}\sigma(-1,4,9)(1-q^{2n_1})\\
	&=xq\Sigma(3,6,15),\\
	\sum_{n_1,n_2,n_3\ge 0}\sigma(1,2,9)I_3&=\sum_{n_1,n_2,n_3\ge 0}\sigma(1,2,9)(1-q^{2n_2})\\
	&=x^2q^2\Sigma(3,6,15),\\
	\sum_{n_1,n_2,n_3\ge 0}\sigma(1,2,9)I_4&=\sum_{n_1,n_2,n_3\ge 0}\sigma(5,4,15)(1-q^{2n_2})\\
	&=x^2q^4\Sigma(7,8,21),\\
	\sum_{n_1,n_2,n_3\ge 0}\sigma(1,2,9)I_5&=-q^2\sum_{n_1,n_2,n_3\ge 0}\sigma(-1,4,9)(1-q^{2n_1})(1-q^{2n_1-2})\\
	&=-x^2q^4\Sigma(7,8,21),\\
	\sum_{n_1,n_2,n_3\ge 0}\sigma(1,2,9)I_6&=q^{-2}\sum_{n_1,n_2,n_3\ge 0}\sigma(3,6,15)(1-q^{2n_1})(1-q^{2n_2})\\
	&=x^3q^9\Sigma(9,12,27),\\
	\sum_{n_1,n_2,n_3\ge 0}\sigma(1,2,9)I_7&=\sum_{n_1,n_2,n_3\ge 0}\sigma(3,6,9)(1-q^{6n_3})\\
	&=-x^3q^9\Sigma(9,12,27),\\
	\sum_{n_1,n_2,n_3\ge 0}\sigma(1,2,9)I_8&=\sum_{n_1,n_2,n_3\ge 0}\sigma(3,4,15)(1-q^{2n_1})(1-q^{2n_2})(1-q^{2n_2-2})\\
	&=x^5q^{19}\Sigma(11,14,33),\\
	\sum_{n_1,n_2,n_3\ge 0}\sigma(1,2,9)I_9&=\sum_{n_1,n_2,n_3\ge 0}\sigma(3,4,9)(1-q^{6n_3})\\
	&=-x^5q^{19}\Sigma(11,14,33).
	\end{align*}
	Therefore,
	\begin{align*}
	\Sigma(1,2,9)&=\sum_{n_1,n_2,n_3\ge 0}\sigma(1,2,9)(I_1+I_2+I_3+I_4+I_5+I_6+I_7+I_8+I_9)\\
	&=\Sigma(3,6,15)+xq\Sigma(3,6,15)+x^2q^2\Sigma(3,6,15),
	\end{align*}
	which is our desired result.
\end{proof}

\begin{proof}[First proof of \eqref{eq:A-analytic}]
	From \eqref{eq:S-def}, we have $S(x)=\Sigma(1,2,9)$ and $S(xq^2)=\Sigma(3,6,15)$. It follows from Theorem \ref{eq:Sigma-1-2-9} that
	$$S(x)=(1+xq+x^2q^2)S(xq^2).$$
	Finally, we conclude by $S(0)=1$ that
	$$S(x)=\prod_{n\ge 0}(1+x q^{2n+1}+x^2 q^{4n+2}),$$
	which is \eqref{eq:A-analytic}.
\end{proof}

\subsection{A computer-assisted proof}

Here, our task is to derive a recurrence for the coefficients $s(M)$ in \eqref{eq:S-def}, which is also computed by the \texttt{qMultiSum} package. We call the following commands:
\begin{lstlisting}[language=Mathematica]
ClearAll[M, n1, n2, n3];
n1 = M - 2 n2 - 3 n3;
summand = ((-1)^n3 q^(
4 Binomial[n1, 2] + 4 Binomial[n2, 2] + 18 Binomial[n3, 2] + 
2 n1 n2 + 6 n2 n3 + 6 n3 n1 + n1 + 2 n2 + 9 n3))/(qPochhammer[
q^2, q^2, n1] qPochhammer[q^2, q^2, n2] qPochhammer[q^6, q^6, 
n3]);
stru = qFindStructureSet[summand, {M}, {n2, n3}, {2}, {2, 1}, {1, 1}, 
qProtocol -> True]
rec = qFindRecurrence[summand, {M}, {n2, n3}, {2}, {2, 1}, {1, 1}, 
qProtocol -> True, StructSet -> stru[[1]]]
sumrec = qSumRecurrence[rec]
\end{lstlisting}
Then we arrive at a sixth-order recurrence.

\begin{theorem}\label{th:S-rec}
	For $M\ge 0$,
	\begin{align*}
	0&=q^{6 M + 24}s(M) -q^{4 M + 18} (1 + q^2 + q^4) s(M+2) -q^{6 M + 27} s(M+3)\\
	&\quad +q^{2 M + 10} (1 + q^2 + q^4 - q^{2 M + 12}) s(M+4) +q^{4 M + 21} s(M+5)\\
	&\quad - (1 - q^{2 M + 12}) s(M+6).
	\end{align*}
\end{theorem}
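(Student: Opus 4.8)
Extract the coefficient of $x^M$ in \eqref{eq:S-def} and use the constraint $n_1+2n_2+3n_3=M$ to eliminate $n_1$, so that $s(M)=\sum_{n_2,n_3\ge 0}F(M,n_2,n_3)$ with
\[
F(M,n_2,n_3)=\frac{(-1)^{n_3}\,q^{\,4\binom{n_1}{2}+4\binom{n_2}{2}+18\binom{n_3}{2}+2n_1n_2+6n_2n_3+6n_3n_1+n_1+2n_2+9n_3}}{(q^2;q^2)_{n_1}(q^2;q^2)_{n_2}(q^6;q^6)_{n_3}},\qquad n_1=M-2n_2-3n_3 .
\]
Under the standard convention $1/(q^k;q^k)_n=0$ for $n<0$, the factor $1/(q^2;q^2)_{n_1}$ annihilates every term with $n_1<0$, so for each fixed $M$ the sum ranges over the finitely many $(n_2,n_3)$ with $2n_2+3n_3\le M$; in particular $s(M)$ is a well-defined rational function of $q$ and no convergence issues arise.

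The plan is to obtain the recurrence by $q$-creative telescoping. Since $F$ is a proper $q$-hypergeometric term in the three discrete variables $(M,n_2,n_3)$, the multisum telescoping machinery implemented in Riese's \texttt{qMultiSum} package \cite{Rie2003} produces a linear $q$-difference operator $\mathcal{R}=\sum_i r_i(q,q^M)\,E_M^{\,i}$ in the forward shift $E_M\colon M\mapsto M+1$, together with ``certificate'' $q$-hypergeometric terms $G_2,G_3$ (rational-function multiples of $F$), satisfying
\[
\mathcal{R}\,F=(E_{n_2}-1)\,G_2+(E_{n_3}-1)\,G_3 .
\]
Summing this identity over $n_2,n_3\ge 0$, the right-hand side telescopes, and the boundary contributions vanish because $G_2,G_3$ inherit the vanishing of $F$ outside its finite support; hence $\mathcal{R}\,s(M)=0$. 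Clearing denominators and reducing the order --- the \texttt{qSumRecurrence} step --- turns this into precisely the sixth-order relation of the theorem. Concretely, this is carried out by the three displayed \texttt{Mathematica} calls, with structure-set parameters $\{2\},\{2,1\},\{1,1\}$; the output is a finite rational-function identity, so the verification is rigorous, if best delegated to a computer.

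The one place demanding genuine care is the vanishing of the telescoped boundary terms, which is why one should record at the outset, via the convention above, that the sum is effectively finite. As an independent cross-check --- indeed a shortcut --- one may invoke the functional equation $S(x)=(1+xq+x^2q^2)\,S(xq^2)$ established in the first proof of \eqref{eq:A-analytic}: comparing coefficients of $x^M$ gives the second-order recurrence
\[
(1-q^{2M})\,s(M)=q^{2M-1}s(M-1)+q^{2M-2}s(M-2)\qquad(M\ge 0,\ s(m)=0\text{ for }m<0),
\]
from which the sixth-order relation of Theorem~\ref{th:S-rec} follows by a routine left division in the Ore algebra of $q$-shift operators (for large $M$, the finitely many small cases being checked by hand). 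Either way, I would confirm the final recurrence numerically against $s(0)=1,\ s(1)=q/(1-q^2),\ s(2)=q^2/((1-q^2)(1-q^4)),\ \dots$, computed straight from \eqref{eq:S-def}.
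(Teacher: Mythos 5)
Your primary argument is the same as the paper's: the paper's proof of Theorem \ref{th:S-rec} consists precisely of running the displayed \texttt{qMultiSum} commands, and your discussion of why the creative-telescoping certificate yields the recurrence (finite support via the convention $1/(q^2;q^2)_{n_1}=0$ for $n_1<0$, hence vanishing boundary terms) correctly fills in what the paper leaves implicit. Your alternative route --- deriving the second-order recurrence $(1-q^{2M})s(M)=q^{2M-1}s(M-1)+q^{2M-2}s(M-2)$ from $S(x)=(1+xq+x^2q^2)S(xq^2)$ and then exhibiting the sixth-order operator as a left multiple --- is logically sound, since Theorem \ref{eq:Sigma-1-2-9} is proved $q$-hypergeometrically and independently of Theorem \ref{th:S-rec}; the only caveat is that it defeats the purpose of Theorem \ref{th:S-rec} in the paper, which is to furnish a \emph{second}, computer-assisted proof of \eqref{eq:A-analytic} not resting on the $q$-hypergeometric argument, so the cross-check should be regarded as exactly that and not as the proof of record. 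Also note that the left-division step is not automatic: you must actually verify that the remainder vanishes (a finite computation over $\mathbb{Q}(q,q^M)$), which is essentially the polynomial identity checked at the end of the paper's second proof of \eqref{eq:A-analytic}.
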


Now, we translate this recurrence to a $q$-difference equation for $S(x)$.

\begin{corollary}\label{coro:S-diff}
	We have
	\begin{align*}
	0&=S(x)-\big[x^2q^2(1+q^2+q^4)+1\big]S(xq^2)\\
	&\quad+\big[x^4q^{10}(1+q^2+q^4)+x^2q^6-xq\big]S(xq^4)-\big[x^6q^{24}-x^3q^9\big]S(xq^6).
	\end{align*}
\end{corollary}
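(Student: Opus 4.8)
The plan is to convert the linear recurrence for the coefficients $s(M)$ from Theorem \ref{th:S-rec} into a $q$-difference equation for the generating series $S(x)=\sum_{M\ge 0}s(M)x^M$ by the standard device of multiplying each term by $x^{M+j}$ (for the appropriate shift $j$) and summing over $M\ge 0$. Concretely, recall that for any $j\ge 0$ one has $\sum_{M\ge 0}s(M+j)x^{M}=x^{-j}\big(S(x)-\sum_{i=0}^{j-1}s(i)x^i\big)$, and more importantly that a term of the form $q^{cM}s(M+j)$ contributes, after multiplication by $x^{M+j}$ and summation, a multiple of $S(xq^c)$ (up to a finite polynomial correction coming from the low-order terms $s(0),\dots,s(j-1)$). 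So the first step is to take the six-term recurrence in Theorem \ref{th:S-rec}, which has the schematic shape
\begin{align*}
0 &= q^{6M+24}s(M)-q^{4M+18}(1+q^2+q^4)s(M+2)-q^{6M+27}s(M+3)\\
&\quad+q^{2M+10}(1+q^2+q^4-q^{2M+12})s(M+4)+q^{4M+21}s(M+5)-(1-q^{2M+12})s(M+6),
\end{align*}
multiply through by $x^{M+6}$, and sum over $M\ge 0$. The term $q^{6M}s(M)$ produces $S(xq^6)$ (times $x^6q^{24}$ and a power of $q$ absorbed into the $q$-shift bookkeeping); the term $q^{4M}s(M+2)$ produces $S(xq^4)$; the terms with $q^{2M}$ and $q^{2M}\cdot q^{2M}=q^{4M}$ inside the $s(M+4)$ bracket produce $S(xq^2)$ and $S(xq^4)$ respectively; the $s(M+5)$ term with $q^{4M}$ gives $S(xq^4)$; and the $s(M+6)$ term, split as $1-q^{2M+12}$, gives $S(x)$ minus a multiple of $S(xq^2)$. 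Collecting the coefficients of $S(x),S(xq^2),S(xq^4),S(xq^6)$ should reproduce exactly the four bracketed polynomials in the statement.

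The main bookkeeping obstacle — and really the only nontrivial point — is the treatment of the finitely many boundary terms $s(0),s(1),\dots,s(5)$ that are dropped when one writes $\sum_{M\ge0}s(M+j)x^{M+j}=S(x)-\sum_{i<j}s(i)x^i$. One must check that all of these spurious polynomial contributions cancel against one another across the six terms of the recurrence. Equivalently (and this is the cleaner way to organize it), one verifies that the claimed $q$-difference equation, when its left-hand side is expanded as a power series in $x$ and the coefficient of $x^{M+6}$ is extracted, reproduces precisely the recurrence of Theorem \ref{th:S-rec} for every $M\ge 0$, while the coefficients of $x^0,x^1,\dots,x^5$ vanish identically; the latter is a finite check using the explicitly computable initial values $s(0),\dots,s(5)$ of the series in \eqref{eq:S-def}. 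Since $S(x)$ is the unique solution of a $q$-difference equation with $S(0)=1$ — the coefficient recurrence determines every $s(M)$ from the leading few — matching the recurrence and the low-order coefficients is enough.

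In practice I would phrase the argument as: extract the coefficient of $x^{M+6}$ from the asserted identity to recover Theorem \ref{th:S-rec} for $M\ge0$, and separately confirm that the coefficients of $x^0$ through $x^5$ on the right-hand side vanish using the first few terms of the defining triple sum (only $n_1,n_2,n_3$ with $n_1+2n_2+3n_3\le 5$ contribute, a short finite list). Both checks are routine power-series manipulations, the second being a finite computation; no genuinely new idea is needed beyond the coefficient-extraction correspondence between term-by-term recurrences and $q$-difference equations, which is exactly the technique already invoked in deriving Corollary \ref{coro:a-rec} from Theorem \ref{th:A-q-diff-eqn}, run here in the reverse direction.
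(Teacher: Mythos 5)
Your proposal is correct and follows essentially the same route as the paper: multiply the recurrence of Theorem \ref{th:S-rec} by $x^{M+6}$, sum over $M\ge 0$, rewrite each shifted sum as $S(xq^c)$ minus its first few terms, and then insert the explicit values of $s(0),\dots,s(5)$ to see that the boundary corrections combine to give the stated $q$-difference equation. The coefficient-extraction bookkeeping you describe, including the finite check on the initial coefficients, is exactly what the paper carries out.
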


\begin{proof}
	We multiply by $x^{M+6}$ on both sides of the recurrence in Theorem \ref{th:S-rec}, and sum over $M\ge 0$. Then
	{\small\begin{align*}
	0&=x^6q^{24}S(xq^6)-x^4q^{10}(1 + q^2 + q^4)\left(S(xq^4)-\sum_{k=0}^1 s(k)(xq^4)^k\right)\\
	&\quad - x^3q^9 \left(S(xq^6)-\sum_{k=0}^2 s(k)(xq^6)^k\right) +x^2q^2(1 + q^2 + q^4)\left(S(xq^2)-\sum_{k=0}^3 s(k)(xq^2)^k\right)\\
	&\quad-x^2q^6\left(S(xq^4)-\sum_{k=0}^3 s(k)(xq^4)^k\right) + xq\left(S(xq^4)-\sum_{k=0}^4 s(k)(xq^4)^k\right)\\
	&\quad -\left(S(x)-\sum_{k=0}^5 s(k)x^k\right) + \left(S(xq^2)-\sum_{k=0}^5 s(k)(xq^2)^k\right).
	\end{align*}}
	Inserting the values of $s(0)$, \ldots, $s(5)$ yields the desired result.
\end{proof}

\begin{proof}[Second proof of \eqref{eq:A-analytic}]
	Notice that if any $F(x)\in \mathbb{C}[[q]][[x]]$ satisfies the $q$-difference equation in Corollary \ref{coro:S-diff}, then it is uniquely determined by $F(0)$. Therefore, if we write
	$$P(x):=\prod_{n\ge 0}(1+x q^{2n+1}+x^2 q^{4n+2}),$$
	by noticing that $S(0)=P(0)=1$, it suffices to verify that
	\begin{align*}
	0&=P(x)-\big[x^2q^2(1+q^2+q^4)+1\big]P(xq^2)\\
	&\quad+\big[x^4q^{10}(1+q^2+q^4)+x^2q^6-xq\big]P(xq^4)-\big[x^6q^{24}-x^3q^9\big]P(xq^6).
	\end{align*}
	Namely,
	\begin{align*}
	0&=(1+xq+x^2q^2)(1+xq^3+x^2q^6)(1+xq^5+x^2q^{10})\\
	&\quad -\big[x^2q^2(1+q^2+q^4)+1\big](1+xq^3+x^2q^6)(1+xq^5+x^2q^{10})\\
	&\quad +\big[x^4q^{10}(1+q^2+q^4)+x^2q^6-xq\big](1+xq^5+x^2q^{10})\\
	&\quad -\big[x^6q^{24}-x^3q^9\big].
	\end{align*}
	This can be checked easily.
	
	Thus, we have $P(x)=S(x)$, which is exactly \eqref{eq:A-analytic}.
\end{proof}

\section{Conclusion}\label{sec:conclusion}

In light of the fact that Schur's theorem seems to be at the center of the theory of partition identities (perhaps even more so than the Rogers--Ramanujan identities), it is a major step forward to have this theorem now embedded in the theory of linked partition ideals. The point of paper \cite{And2017} was essentially to make the case that Alladi's addition to Schur's theorem is actually the core of this result, and identity \eqref{eq:A-analytic} certainly reaffirms that observation.

It is clear that the theory of linked partition ideals is still in its infancy after almost forty years. The goal would be a classification theorem for linked partition ideals comparable to the classification theorem \cite[Theorem 8.4, p.~126]{And1976} for partition ideals of order $1$. With the advent of powerful computer algebra packages, we have seen advances such as obtained in this paper and in \cite{Che2021} and \cite{CL2020}. We hope to see much progress in the future.

\subsection*{Acknowledgements}

The first author was supported by a grant (\#633284) from the Simons Foundation. The second author was supported by a Killam Postdoctoral Fellowship from the Killam Trusts.

\bibliographystyle{amsplain}

\end{document}